\newtheorem{theorem}{Theorem}[section]
\newtheorem{lemma}[theorem]{Lemma}
\theoremstyle{definition}
\theoremstyle{remark}
\newtheorem{remark}[theorem]{Remark}
\numberwithin{equation}{section}
\DeclareMathOperator*{\esup}{ess\,sup}
\begin{document}

\title[On compactness of Laplace and Stieltjes transformations]{On compactness of Laplace and Stieltjes type transformations in Lebesgue spaces}

\author{Elena P. Ushakova}

\curraddr{Department of Mathematics, University of York, York, YO10 5DD, UK.}\email{elena.ushakova@york.ac.uk}

\address{Computing Centre of the Far-Eastern Branch of the Russian Academy of Sciences, Khabarovsk, 680000, RUSSIA.}
\email{elenau@inbox.ru}

\begin{abstract}
We obtain criteria for integral transformations of Laplace and Stieltjes type to be compact on Lebesgue spaces of real functions on the semiaxis.
\end{abstract}

\keywords{Compactness, Boundedness, Lebesgue space, Integral operator, Laplace transformation, Stieltjes transformation}

\subjclass{47G10}

\maketitle

\section{INTRODUCTION}
\label{I}

Let $L^r(I)$ denote the Lebesgue space of all measurable functions $f(x)$ integrable to a power $0<r<\infty$ on an interval $I\subseteq[0,\infty)=:
\mathbb{R}^+,$ that is
$$L^r(I)=\Bigl\{f\colon \|f\|_{r,I}:=\Bigl(\int_I|f(x)|^r\mathrm{d}x\Bigr)^{1/r}<\infty\Bigr\}.$$
If $I=\mathbb{R}^+$ we write $L^r:=L^r(\mathbb{R}^+),$ and
$\|f\|_r$ means $\|f\|_{r,\mathbb{R^+}}.$
For $r=\infty$ we denote $$L^\infty(I)=\Bigl\{f\colon \|f\|_{\infty,I}:=\esup_{t\in I}|f(t)|<\infty\Bigr\}.$$

Take $\lambda>0,$ $p \ge 1,$ $q>0$ and put $p':=p/(p-1).$ Assume $v\in L^{p'}_{loc}(\mathbb{R}^+)$ and $w\in L^q_{loc}(\mathbb{R}^+)$ are weight functions. In this article we study compactness properties of two particular cases of an integral transformation $T:L^p\to L^q$ of the form \begin{equation}\label{0} Tf(x):=w(x)\int_{\mathbb{R}^+}k_T(x,y)f(y)v(y)\mathrm{d}y, \hspace{1cm}x\in\mathbb{R}^+,\end{equation}
with a non-negative kernel $k_T(x,y)$ decreasing in variable $y$. We take as $T$ the Laplace integral operator
\begin{equation}\label{L}
\mathcal{L}f(x):=\int_{\mathbb{R}^+}\mathrm{e}^{-xy^\lambda}f(y)v(y)\mathrm{d}y,\hspace{1cm}
x\in\mathbb{R}^+,\end{equation} with the outer weight function $w(x)=1$ and $k_L(x,y):=\mathrm{e}^{-xy^\lambda},$
and a Stieltjes type transformation of the form
\begin{equation}\label{S}
Sf(x):=w(x)\int_{\mathbb{R}^+}\frac{f(y)v(y)\mathrm{d}y}{x^\lambda+y^\lambda},\hspace{1cm}
x\in\mathbb{R}^+,\end{equation} with $k_S(x,y):=(x^\lambda+y^\lambda)^{-1}.$ These operators are related to each other by \eqref{rep}.

With an appropriate choice of $\lambda,$ $v$ and $w$ transformations $\mathcal{L}$ and $S$ become special cases of conventional convolution transformation $F(x)=\int_{-\infty}^\infty f(t)G(x-t)\mathrm{d}t,$ $-\infty<x<\infty$ \cite[Ch.8, \S\S~8.5, 8.6]{Z}. The Stieltjes type operator \eqref{S} has also connections with Hilbert's double series theorem (see \cite{A} for details). Some interesting properties and applications of the Laplace type transform \eqref{L} to differential equations are indicated in \cite[Ch. 5]{KKOP}.

In this work we find explicit necessary and sufficient conditions for $L^p-L^q$--compactness of $\mathcal{L}$ and $S$ expressed in terms of kernels $k_L,k_S,$ weight functions $v,w$ and properties of the Lebesgue spaces. The results may be useful for study of characteristic values of the transformations. All cases of summation parameters $p\ge 1$ and $q>0$ are considered. If $0<p<1$ then $T:L^p\to L^q$ is compact in trivial case only (see \cite[Theorem 2]{PS}). Note that $L^2-L^2$ compactness of \eqref{L} and \eqref{S} was studied in \cite{DAN10, SMZ10}. We generalize these results for all positive $p$ and $q.$

Our main method is well-known and consists in splitting an initial operator into a sum of a compact operator and operators with small norms (see e.g. \cite{EGP}, \cite{LomStep}, \cite{Osmz}).

The article is organized as follows. Section 3 is devoted to the compactness of the Laplace transformation \eqref{L}. Criteria for the compactness of the Stieltjes operator \eqref{S} appear in Section 4. Note that the case of negative $\lambda$ in $S$ ensues from the results for positive $\lambda$ by simple modification of the weight functions $v$ and $w.$ In Section 5 we discuss cases $p=\infty$ and $q=\infty.$ Some auxiliary results are collected in Section 2.

Throughout the article we assume $v,w$ to be non-negative.
Products of the form $0\cdot\infty$ are supposed to be equal to $0.$ An equivalence $A\approx B$ means either $A=c_0B$ or $c_1A\le B\le c_2A,$ where $c_i,$ $i=0,1,2$ are constants depending on $\lambda,p,q$ only. The symbol $\mathbb{Z}$ denotes integers, $\chi_E$ stands for a characteristic function of a subset $E\subset\mathbb{R}^+.$ In addition, we use $=:$ and $:=$ for marking new quantities.

\section{PRELIMINARIES}

In this part we set auxiliary results concerning boundedness of the transformations and adduce some known results on compactness of integral operators we shall need later on in our proofs.

\subsection{Boundedness}
We start from a statement for an integral operator $T$ from $L^p$ to $L^q,$ when $p=1.$

\begin{theorem}\label{p1}\cite[Theorem 3.2]{SinStep} Suppose $0<q<\infty$ and a non-negative kernel $k_T(x,y)$ of the operator \eqref{0} is non-increasing in $y$ for each $x.$ For all $f\ge 0$ the best constant $C$ in the inequality
\begin{equation}\label{-}\biggl(\int_{\mathbb{R}^+} \biggl(\int_{\mathbb{R}^+}k_T(x,y)f(y)v(y)\mathrm{d}y\biggr)^q w^q(x)\mathrm{d}x\biggr)^{1/q}\le C\int_{\mathbb{R}^+} f(y)\mathrm{d}y\end{equation}
is unchanged when $v(y)$ is replaced by ${\bar v}_0(y),$ where $${\bar v}_{c_1}(t):=\esup_{c_1<x<t}v(x).$$ \end{theorem}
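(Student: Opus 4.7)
The plan is to prove the reverse inequality $C_{\bar v_0}\le C_v$ by a direct weight-replacement argument: the monotonicity of $k_T(x,y)$ in $y$ lets me shift the mass of $f$ leftward onto a point where $v$ nearly realises $\bar v_0$, without increasing $\|f\|_1$. The easy direction $C_v\le C_{\bar v_0}$ is automatic from $v\le\bar v_0$ a.e.\ together with positivity of $f,k_T,w$.

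For the reverse, fix $n\in\mathbb{N}$. By the very definition of $\bar v_0$, for each $y>0$ with $\bar v_0(y)>0$ I can measurably select $t_n(y)\in(0,y)$ with
$$v(t_n(y))\ge\Bigl(1-\tfrac{1}{n}\Bigr)\bar v_0(y).$$
Since $t_n(y)\le y$ and $k_T(x,\cdot)$ is non-increasing,
$$k_T(x,y)\bar v_0(y)\le\tfrac{n}{n-1}\,k_T(x,y)\,v(t_n(y))\le\tfrac{n}{n-1}\,k_T(x,t_n(y))\,v(t_n(y)).$$
Integrating against $f\ge 0$ and pushing $f\,\mathrm{d}y$ forward by $t_n$---that is, introducing the finite Borel measure $\mu_n:=(t_n)_*(f\,\mathrm{d}y)$, which has total mass $\|f\|_1$---yields
$$\int_{\mathbb{R}^+}k_T(x,y)f(y)\bar v_0(y)\,\mathrm{d}y\le\tfrac{n}{n-1}\int_{\mathbb{R}^+}k_T(x,s)v(s)\,\mathrm{d}\mu_n(s).$$
Approximating $\mu_n$ by non-negative $L^1$ functions with the same total mass, applying the hypothesis \eqref{-} to these approximants, and invoking Fatou's lemma gives
$$\Bigl\|w\int_{\mathbb{R}^+} k_T(\cdot,y)f(y)\bar v_0(y)\,\mathrm{d}y\Bigr\|_q\le\tfrac{n}{n-1}\,C_v\,\|f\|_1.$$
Letting $n\to\infty$ produces $C_{\bar v_0}\le C_v$.

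The two points that will need care are the measurable selection of $t_n$---which I would obtain from the Kuratowski--Ryll-Nardzewski theorem applied to the multifunction $y\mapsto\{t\in(0,y):v(t)\ge(1-1/n)\bar v_0(y)\}$, or by an explicit construction at Lebesgue density points of the superlevel sets of $v$---and the extension of \eqref{-} from $L^1$ functions to finite Borel measures, which is a routine mollification-plus-Fatou argument. I expect the measurable selection to be the main technical obstacle; the chain of pointwise kernel-monotonicity inequalities, which is the heart of the argument, works uniformly for all $0<q<\infty$ without any case split between the Banach range $q\ge 1$ and the quasi-Banach range $q<1$.
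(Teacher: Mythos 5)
The paper itself offers no proof of this statement---it is imported verbatim from \cite[Theorem 3.2]{SinStep}---so there is no internal argument to compare against, only the standard one. Your overall strategy is the right one and is in the spirit of that argument: the trivial direction from $v\le\bar v_0$ a.e.\ (which does hold, though it deserves a line of justification since $\bar v_0(y)$ is an essential supremum over the open interval $(0,y)$), and the reverse direction by transporting the mass of $f$ leftward onto points where $v$ nearly attains $\bar v_0$, exploiting that $k_T(x,\cdot)$ is non-increasing. The chain of pointwise kernel inequalities is correct, and so is your observation that no case split between $q\ge1$ and $q<1$ is needed, since only Fatou (not the triangle inequality) is used.

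The step you dismiss as ``routine mollification-plus-Fatou'' is, however, a genuine gap. The pushforward $\mu_n=(t_n)_*(f\,\mathrm{d}y)$ can be singular with respect to Lebesgue measure, and the integrand $s\mapsto k_T(x,s)v(s)$ is merely measurable ($v$ is only locally $p'$-integrable). Weak-$*$ approximation of $\mu_n$ by $L^1$ densities therefore gives no control---not even the one-sided inequality $\int k_T(x,s)v(s)\,\mathrm{d}\mu_n(s)\le\liminf_k\int k_T(x,s)v(s)g_k(s)\,\mathrm{d}s$ that Fatou presupposes---because passing integrals of a merely measurable function through weak-$*$ limits against a possibly singular measure fails. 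The cure is to never form point masses: for each $y$ the set $G_n(y):=\{t\in(0,y)\colon v(t)\ge(1-1/n)\bar v_0(y)\}$ has \emph{positive Lebesgue measure}, its indicator is jointly measurable in $(y,t)$, and one can spread $f(y)\,\mathrm{d}y$ uniformly over a measurably chosen portion of $G_n(y)$ of measure $\min\{1,|G_n(y)|\}$. This produces directly a function $g_n\in L^1$ with $\|g_n\|_1=\|f\|_1$ and, for every $x$, $\int k_T(x,s)v(s)g_n(s)\,\mathrm{d}s\ge(1-1/n)\int k_T(x,y)f(y)\bar v_0(y)\,\mathrm{d}y$, since both $k_T(x,s)\ge k_T(x,y)$ and $v(s)\ge(1-1/n)\bar v_0(y)$ hold on all of $G_n(y)$ rather than at one selected point. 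Applying \eqref{-} to $g_n$ and letting $n\to\infty$ gives $C_{\bar v_0}\le C_v$, and this construction simultaneously dissolves the measurable-selection problem you flagged as the main obstacle: one needs a measurable family of sets, which the joint measurability of $\{(y,t)\colon t<y,\ v(t)\ge(1-1/n)\bar v_0(y)\}$ supplies without any abstract selection theorem.
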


Consider the Laplace operator $\mathcal{L}.$ Denote $r:=\frac{p\,q}{p-q},$ $q':=\frac{q}{q-1},$  $$\alpha^q:=\min\{2,2^{q-1}\},\hspace{.5cm}\beta^q:=\begin{cases}\frac{2}{q-1}, & 1<q\le 2,\\ 2^{q-1}, & q>2,\end{cases}\hspace{5mm}V_{c_1}(t):=\int_{c_1}^t v^{p'},$$
$$F_{c_1}(y):=\int_{c_1}^y f(t)v(t)\mathrm{d}t,\ \ \mathcal{F}(x):=\int_{c_1}^{c_2}\mathrm{e}^{-xy^ \lambda}f(y)v(y)\mathrm{d}y,$$ $$ A_{\mathcal{L}, \langle c_1,c_2\rangle}(t):=\bigl(t^{-\lambda}-c_2^{-\lambda}\bigr)^{1/q} \bigl[V_{c_1}(t)\bigr]^{1/p'},\ \ A_{\mathcal{L},\langle c_1,c_2\rangle}:=\sup_{c_1<t<c_2}A_{\mathcal{L}, \langle c_1,c_2\rangle}(t),$$ $$A_\mathcal{L}:=A_{\mathcal{L},\mathbb{R}^+},\ \ B_{\mathcal{L},\langle c_1,c_2\rangle}:=\left(\int_{c_1}^{c_2} \Bigl[t^{-\lambda}-c_2^{-\lambda}\Bigr]^{r/q} \bigl[V_{c_1}(t)\bigr]^{r/q'}v^{p'}(t)\mathrm{d}t\right)^{1/r},$$
$$B_\mathcal{L}:=B_{\mathcal{L},\mathbb{R}^+}= \biggl(\int_{\mathbb{R}^+}B_\mathcal{L}(t)\mathrm{d}t\biggr)^{1/r}, \ \  B_\mathcal{L}(t):=t^{-\lambda r/q}\bigl[V_{0}(t)\bigr]^{r/q'}v^{p'}(t),$$
$$B_{q}(t):=t^{-\lambda/q}v(t),\ \ {\bar B}_{q}(t):=t^{-\lambda/q}{\bar v}_0(t),\ \ B_p:=\biggl(\int_{\mathbb{R}^+}y^{-\lambda p'}v^{p'}(y)\mathrm{d}y\biggr)^{1/p'},$$
$$B_{q',\langle c_1,c_2\rangle}:=\biggl(\int_{c_1}^{c_2} [t^{-\lambda}-c_2^{-\lambda}]^{q/(1-q)}t^{-\lambda-1}{\bar v}_{c_1}(t)^{q/(1-q)}\mathrm{d}t\biggr)^{(1-q)/q},$$
$$ B_{q'}=B_{{q'},\mathbb{R}^+}=\left(\int_{\mathbb{R}^+}B_{q'}(t)\mathrm{d}t\right)^{(1-q)/q}, \ \ B_{q'}(t):=t^{-\lambda/(1-q)-1}{\bar v}_0(t)^{q/(1-q)},$$
$$D_{\langle c_1,c_2\rangle}:=c_2^{-\lambda/q}
\bigl[V_{c_1}(c_2)\bigr]^{1/p'}.$$

Various conditions were found for the boundedness of the Laplace transformation \eqref{L} in Lebesgue spaces  (see e.g. \cite{AH83}, \cite{B}). Convenient for our purposes $L^p-L^q$ criterion for $\mathcal{L}$ was obtained in  \cite[Theorem 1]{SUsmz} (see also \cite[Theorem 1]{PSUdan}).

\begin{theorem}\label{T0}\cite{PSUdan, SUsmz} The following estimates are true for the norm of $\mathcal{L}:$\\
{\rm(i)} Denote $\alpha_1:=\alpha q^{-2/q}$ and $\beta_1:=\beta (q')^{1/p'}.$ If $1<p\le q<\infty$ then  \begin{equation*}\alpha_1 A_\mathcal{L}\le
\|\mathcal{L}\|_{L^p\to L^q}\le\beta_1 A_\mathcal{L}.\end{equation*}
{\rm(ii)} Let $1\le q<p<\infty.$ If $q=1$ then $\|\mathcal{L}\|_{L^p\to L^1}=B_p.$ If $q>1$ then
\begin{equation*}\alpha_2 B_\mathcal{L}\le \|\mathcal{L}\|_{L^p\to L^q}\le \beta_2 B_\mathcal{L}\end{equation*} with $\alpha_2:=\alpha(p'q/r)^{1/q'}q^{-1/q},$ $\beta_2:=\beta (p')^{1/q'}.$ \\
{\rm(iii)} Put $\alpha_3:=q^{-1/q},$ $\beta_3:=p^{1/p}(p')^{1/q'}q^{-2/q}r^{1/r}.$ If $0<q<1<p<\infty$ then \begin{equation*}\alpha_3\,\|B_{q}\|_{p'}\le \|\mathcal{L}\|_{L^p\to L^q}\le \beta_3 B_\mathcal{L}.\end{equation*}
{\rm(iv)} Let $0<q\le 1=p.$ If $0<q<1$ then
\begin{equation*}\alpha_4\ \esup_{t\in\mathbb{R}^+}B_{q}(t) \le \|\mathcal{L}\|_{L^1\to L^q}\le  \beta_4B_{q'},\end{equation*}
where $\alpha_4:=q^{-1/q}$ and $\beta_4:=\lambda^{(1-q)/q}q^{-2/q} (1-q)^{-(1-q)/q}.$ If $q=1$ then
\begin{eqnarray} \label{200}2^{-\lambda}\esup_{t\in\mathbb{R}^+}t^{-\lambda}{\bar v}_0(t)\le 2^{-\lambda}\sup_{t\in\mathbb{R}^+}t^{-\lambda}{\bar v}_0(t)\le \|\mathcal{L}\|_{L^1\to L^1}\nonumber\\\le \esup_{t\in\mathbb{R}^+}t^{-\lambda}{\bar v}_0(t)\le\sup_{t\in\mathbb{R}^+}t^{-\lambda}{\bar v}_0(t).\end{eqnarray}
\end{theorem}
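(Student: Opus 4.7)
My approach is to reduce all four cases to classical weighted Hardy inequalities. The key observation is that for the purposes of $L^p$-$L^q$ estimates the kernel $\mathrm{e}^{-xy^\lambda}$ is equivalent to the indicator $\chi_{[0,x^{-1/\lambda}]}(y)$, up to constants depending only on $\lambda,p,q$. On one side, $\mathrm{e}^{-xy^\lambda}\ge\mathrm{e}^{-1}$ whenever $y\le x^{-1/\lambda}$, so
$$\mathcal{L}f(x)\ge\mathrm{e}^{-1}\int_0^{x^{-1/\lambda}}f(y)v(y)\,\mathrm{d}y.$$
On the other side, splitting the integral at $y=x^{-1/\lambda}$ and treating the tail $y>x^{-1/\lambda}$ through the layer-cake identity $\mathrm{e}^{-xy^\lambda}=\int_{xy^\lambda}^\infty\mathrm{e}^{-s}\,\mathrm{d}s$ followed by Fubini and the substitution $u=(s/x)^{1/\lambda}$ gives an upper bound in terms of the same Hardy average.

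After this reduction, the change of variable $t=x^{-1/\lambda}$ (so that $\mathrm{d}x=\lambda t^{-\lambda-1}\,\mathrm{d}t$) transforms the target inequality into the weighted Hardy inequality
$$\Bigl(\lambda\int_0^\infty\Bigl[\int_0^t f(y)v(y)\,\mathrm{d}y\Bigr]^q t^{-\lambda-1}\,\mathrm{d}t\Bigr)^{1/q}\le C\|f\|_p.$$
I then apply the classical Muckenhoupt--Bradley--Maz'ya--Sinnamon criteria case-by-case. Case (i), $1<p\le q$, yields the supremum condition with constant $A_\mathcal{L}$. Case (ii), $1<q<p$, yields the Maz'ya--Rozin integrated constant $B_\mathcal{L}$; the edge case $q=1$ is handled directly by Fubini, which gives $\|\mathcal{L}f\|_1=\int f(y)v(y)y^{-\lambda}\,\mathrm{d}y$ and thus $\|\mathcal{L}\|_{L^p\to L^1}=\|y^{-\lambda}v\|_{p'}=B_p$. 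For case (iii), $0<q<1<p$, Sinnamon's Hardy inequality delivers the upper bound $B_\mathcal{L}$, while testing on $f=v^{p'-1}\chi_{[0,t]}$ and optimising over $t$ yields the sharp lower bound $\|B_q\|_{p'}$. Finally, in case (iv) with $p=1$, Theorem \ref{p1} first allows me to replace $v$ by its left-running essential supremum $\bar v_0$, after which sharp one-sided estimates on $\mathrm{e}^{-xy^\lambda}$ produce the constants in \eqref{200} and the $B_{q'}$ bound in the $0<q<1$ case.

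\emph{Main obstacle.} The truly delicate part is the bookkeeping of the sharp constants $\alpha_i,\beta_i$. In case (ii) the factor $(p'q/r)^{1/q'}$ must be extracted from a careful application of Minkowski's integral inequality to the tail arising from the layer-cake step. In case (iv) with $q=1$ the asymmetric pair $2^{-\lambda}$ and $1$ in \eqref{200} is forced by test functions of the form $\chi_{(0,t_0]}$ combined with the observation that $\mathrm{e}^{-xy^\lambda}\ge1/2$ holds on a set of length comparable to $x^{-1/\lambda}$ only up to the factor $2^{-1/\lambda}$, which raised to the power $\lambda$ gives the constant $2^{-\lambda}$. I also expect the Sinnamon-type passage from $v$ to $\bar v_0$ in case (iv), needed to align the upper bound with $B_{q'}$ rather than a generic Hardy constant, to require additional attention.
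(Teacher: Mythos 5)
This theorem is not proved in the paper at all: it is imported verbatim from \cite{SUsmz} and \cite{PSUdan}, and the only in-paper indications of its proof are Lemma \ref{L0} (the two-sided reduction of $\int\mathcal{F}^q$ to the Hardy functional $\lambda\int F_{c_1}^q(y)y^{-\lambda-1}\mathrm{d}y$ plus a boundary term) and Remark \ref{sigma}. Your overall skeleton --- replacing $\mathrm{e}^{-xy^\lambda}$ by $\chi_{[0,x^{-1/\lambda}]}(y)$ up to constants, changing variables $t=x^{-1/\lambda}$, and invoking the Muckenhoupt--Bradley--Maz'ya--Sinnamon criteria case by case, with Fubini for $q=1<p$ and Theorem \ref{p1} for $p=1$ --- is exactly the method behind the cited results, so at the level of strategy your plan is the intended one.

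Two of your lower bounds, however, do not go through as described. First, in case (iii) you claim the lower bound $\alpha_3\|B_q\|_{p'}$ follows by ``testing on $f=v^{p'-1}\chi_{[0,t]}$ and optimising over $t$.'' A one-parameter family optimised over $t$ can only produce a supremum-type quantity such as $A_\mathcal{L}$; it cannot produce the global $L^{p'}$-integral $\|B_q\|_{p'}=\bigl(\int_{\mathbb{R}^+}t^{-\lambda p'/q}v^{p'}(t)\mathrm{d}t\bigr)^{1/p'}$. The correct route for $0<q<1$ is the reverse Minkowski integral inequality for nonnegative functions, $\bigl\|\int g_y\,\mathrm{d}y\bigr\|_q\ge\int\|g_y\|_q\,\mathrm{d}y$, applied to $g_y(x)=\mathrm{e}^{-xy^\lambda}f(y)v(y)$; since $\|\mathrm{e}^{-\cdot\,y^\lambda}\|_q=q^{-1/q}y^{-\lambda/q}$, this gives $\|\mathcal{L}f\|_q\ge q^{-1/q}\int y^{-\lambda/q}f(y)v(y)\mathrm{d}y$ and hence, by duality in $L^p$, the constant $q^{-1/q}\|B_q\|_{p'}$. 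Second, your derivation of the factor $2^{-\lambda}$ in \eqref{200} is off: the set where $\mathrm{e}^{-xy^\lambda}\ge 1/2$ is $\{y\le(\ln 2)^{1/\lambda}x^{-1/\lambda}\}$, and raising $(\ln 2)^{1/\lambda}$ (or your $2^{-1/\lambda}$) to the power $\lambda$ gives $\ln 2$ (or $1/2$), not $2^{-\lambda}$; moreover with a test function $\chi_{(0,t_0]}$ you would need to bound $t_0^{-1}\int_0^{t_0}{\bar v}_0$ from below by ${\bar v}_0(t_0)$, which fails since ${\bar v}_0$ is nondecreasing. The paper's Remark \ref{sigma} avoids both problems by taking $f_t(y)=t^{-1}\chi_{(t,2t)}(y)$: on the support ${\bar v}_0(y)\ge{\bar v}_0(t)$, and $\int_0^\infty\mathrm{e}^{-x(2t)^\lambda}\mathrm{d}x=(2t)^{-\lambda}$ is precisely where $2^{-\lambda}$ comes from. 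The remaining cases, including the Fubini identity $\|\mathcal{L}\|_{L^p\to L^1}=B_p$ and the use of Theorem \ref{p1} to pass from $v$ to ${\bar v}_0$, are fine as sketched.
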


\begin{remark} Integration by parts gives the equality \begin{equation*}B_\mathcal{L}^r=(\lambda p'/q)\int_{\mathbb{R}^+}\bigl[V_0(t)\bigr]^{r/p'}t^{-\lambda r/q-1}\mathrm{d}t, \end{equation*} which is true either $v\in L^{p'}[0,t],$ $t>0,$ or $q>1$ (see \cite[Remark, p. 8]{SinStep}).
\end{remark}
\begin{remark} $B_q(t)$ in (iv) may be replaced by ${\bar B}_q(t)$ (see Theorem \ref{p1}).
\end{remark}
\begin{remark}\label{sigma} The lower estimates in \eqref{200} can be proved by applying
a function $f_t(y)=t^{-1}\chi_{(t,2t)}(y),$ $t>0$ into \eqref{-} with $v$ replaced by ${\bar v}_0.$
\end{remark}

The results (i) and (ii) of Theorem 2.1 rest on \cite[Lemma 1]{SUsmz}. The following statement is its modification for the Laplace operator of the form $f\to\mathcal{L}(f\chi_{\langle c_1,c_2\rangle}),$ where $0\le c_1<c_2\le\infty$ and $\langle\cdot,\cdot\rangle$ denotes any of intervals $(\cdot,\cdot)$, $[\cdot,\cdot]$, $[\cdot,\cdot)$ or $(\cdot,\cdot].$

\begin{lemma}\label{L0}
Let $1<q<\infty.$ Assume $f\ge 0$ and suppose  the following conditions are satisfied: $\int_{\mathbb{R}^+}\mathcal{F}^q(x)\mathrm{d}x<\infty,$  $\lim\limits_{t\to c_1}t^{-\lambda}\bigl[V_{c_1}(t)\bigr]^{q/p'}=0,$
$\lim\limits_{t\to c_2}t^{-\lambda}\bigl[V_{c_1}(t)\bigr]^{q/p'}<\infty.$ Then \begin{eqnarray*}
\alpha^q\lambda q^{-2}\int_{c_1}^{c_2}F_{c_1}^q(y)
y^{-\lambda-1}\mathrm{d}y+\alpha^qq^{-2}
F_{c_1}^q(c_2)c_2^{-\lambda}
\le\int_{\mathbb{R}^+}\mathcal{F}^q(x)\mathrm{d}x
\\\le (\beta^q\lambda/q)\int_{c_1}^{c_2}F^q_{c_1}(y)
y^{-\lambda-1}\mathrm{d}y+(\beta^q/q)
F^q_{c_1}(c_2)c_2^{-\lambda}.\end{eqnarray*}
\end{lemma}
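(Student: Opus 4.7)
The natural first step is to integrate by parts in $y$: using $F_{c_1}(c_1)=0$ and $F_{c_1}'(y)=f(y)v(y)$, one obtains
\[
\mathcal{F}(x) \;=\; F_{c_1}(c_2)\,\mathrm{e}^{-xc_2^\lambda} \;+\; \lambda x\int_{c_1}^{c_2} F_{c_1}(y)\,y^{\lambda-1}\,\mathrm{e}^{-xy^\lambda}\,\mathrm{d}y.
\]
The two limit hypotheses on $t^{-\lambda}[V_{c_1}(t)]^{q/p'}$, combined with the H\"older bound $F_{c_1}(t)\le\|f\|_p\,V_{c_1}(t)^{1/p'}$, justify the boundary evaluations at $c_1$ and $c_2$ and guarantee the integrability needed at both endpoints below.

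For the upper bound, I would apply the inequality $(A+B)^q\le c_q(A^q+B^q)$ to split the decomposition above, with the appropriate value of $c_q$: the standard convexity constant $2^{q-1}$ for $q>2$, and the sharper constant arising from the Taylor-type identity $(A+B)^q=A^q+B^q+q(q-1)\int_0^A\int_0^B(s+t)^{q-2}\,\mathrm{d}s\,\mathrm{d}t$ for $1<q\le 2$ (which is where the ``$2/(q-1)$'' in $\beta^q$ comes from). The boundary contribution yields $F_{c_1}^q(c_2)/(qc_2^\lambda)$ via $\int_0^\infty \mathrm{e}^{-qxc_2^\lambda}\,\mathrm{d}x=1/(qc_2^\lambda)$. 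For the integral piece, observe that $\mathrm{d}\mu_x(y):=\lambda x\,y^{\lambda-1}\mathrm{e}^{-xy^\lambda}\,\mathrm{d}y$ is a probability measure on $(0,\infty)$, hence a sub-probability measure on $(c_1,c_2)$; Jensen's inequality then gives $\bigl(\int F_{c_1}\,\mathrm{d}\mu_x\bigr)^q\le\int F_{c_1}^q\,\mathrm{d}\mu_x$, and Fubini combined with $\int_0^\infty \lambda x\,y^{\lambda-1}\mathrm{e}^{-xy^\lambda}\,\mathrm{d}x=\lambda y^{-\lambda-1}$ produces the target term $\lambda\int F_{c_1}^q y^{-\lambda-1}\,\mathrm{d}y$.

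For the lower bound, the inequality $(A+B)^q\ge A^q+B^q$ (valid for $q\ge1$, $A,B\ge 0$) separates the two contributions. The boundary piece contributes $F_{c_1}^q(c_2)c_2^{-\lambda}/q$ as above. For the integral piece, use the pointwise lower bound $\mathrm{e}^{-xy^\lambda}\ge \mathrm{e}^{-1}$ on the range $y\le x^{-1/\lambda}$, which gives $\mathcal{F}(x)\ge c\,F_{c_1}\bigl(\min(c_2,x^{-1/\lambda})\bigr)$; splitting the $x$-integral at $x=c_2^{-\lambda}$ and applying the change of variable $y=x^{-1/\lambda}$ (so $\mathrm{d}x=\lambda y^{-\lambda-1}\mathrm{d}y$) recovers the structure $\lambda\int F_{c_1}^q y^{-\lambda-1}\mathrm{d}y$ with a universal constant.

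The main obstacle is producing the exact constants $\alpha^q q^{-2}$ and $\beta^q/q$ rather than cruder surrogates (e.g.\ $\mathrm{e}^{-q}$, $2^{q-1}$). One factor $q^{-1}$ is shared between boundary and integral parts, coming from the elementary integral $\int_0^\infty \mathrm{e}^{-qxc_2^\lambda}\,\mathrm{d}x$ and its analogue after Fubini. The second factor $q^{-1}$ in $\alpha^q q^{-2}$ must be extracted by combining the separation $(A+B)^q\ge A^q+B^q$ with a sharper optimization of the pointwise lower bound on the integral piece. The split cases $1<q\le 2$ and $q>2$ of $\beta^q$ and $\alpha^q$ reflect the two regimes in which the sharp $(a+b)^q$-type inequality has different form; tracking these constants through Fubini and the change of variables is the technical heart of the argument.
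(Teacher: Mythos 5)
The paper itself contains no proof of Lemma~\ref{L0}: it is stated as a modification of \cite[Lemma~1]{SUsmz}, so there is no internal argument to compare against, and your attempt has to be judged on its own. Your skeleton --- integration by parts in $y$ to isolate the boundary term $F_{c_1}(c_2)\mathrm{e}^{-xc_2^{\lambda}}$, then Fubini and the change of variables to produce $\lambda\int F_{c_1}^{q}y^{-\lambda-1}\mathrm{d}y$ --- is the right general shape, and you correctly locate where the two limit hypotheses enter. But the lemma \emph{is} the statement of the specific constants $\alpha^{q}q^{-2}$ and $\beta^{q}/q$ (they propagate into the explicit $\alpha_i,\beta_i$ of Theorems~\ref{T0} and~\ref{T1}), and your argument, by your own admission, does not produce them; moreover, the two steps you propose for recovering them would fail. (a) You assert that a factor $q^{-1}$ for the integral term of the upper bound comes from ``the analogue after Fubini'' of $\int_0^{\infty}\mathrm{e}^{-qxc_2^{\lambda}}\mathrm{d}x$. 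It does not: your own computation $\int_0^{\infty}\lambda x\,y^{\lambda-1}\mathrm{e}^{-xy^{\lambda}}\mathrm{d}x=\lambda y^{-\lambda-1}$ contains no $q$, so Jensen plus Fubini yields $c_q\,\lambda\int F_{c_1}^{q}y^{-\lambda-1}\mathrm{d}y$, a factor $q$ larger than the stated $\beta^{q}\lambda q^{-1}\int F_{c_1}^{q}y^{-\lambda-1}\mathrm{d}y$. (b) For the lower bound, the truncation $\mathrm{e}^{-xy^{\lambda}}\ge\mathrm{e}^{-\theta}$ on $y\le(\theta/x)^{1/\lambda}$ gives, after the change of variables, at best $\sup_{\theta>0}\theta\mathrm{e}^{-q\theta}=(\mathrm{e}q)^{-1}$ in front of $\lambda\int F_{c_1}^{q}y^{-\lambda-1}\mathrm{d}y$, and $(\mathrm{e}q)^{-1}<\alpha^{q}q^{-2}$ already at $q=2$ (about $0.18$ versus $1/2$), so no sharpening of the pointwise truncation can reach the stated constant.

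The route that does give the constants works with $\Phi:=\int_{c_1}^{c_2}y^{-\lambda}F_{c_1}^{q-1}(y)f(y)v(y)\mathrm{d}y$, which by integration by parts (using the two limit hypotheses and H\"older, as you note) equals exactly $\tfrac{\lambda}{q}\int_{c_1}^{c_2}F_{c_1}^{q}y^{-\lambda-1}\mathrm{d}y+\tfrac{1}{q}F_{c_1}^{q}(c_2)c_2^{-\lambda}$, i.e.\ the bracket on both sides of the asserted inequality. Writing $G(x,y):=\int_{c_1}^{y}\mathrm{e}^{-xt^{\lambda}}f(t)v(t)\mathrm{d}t$, one has $\mathcal{F}^{q}(x)=q\int_{c_1}^{c_2}G^{q-1}(x,y)\mathrm{e}^{-xy^{\lambda}}f(y)v(y)\mathrm{d}y$ together with the monotonicity bounds $\mathrm{e}^{-xy^{\lambda}}F_{c_1}(y)\le G(x,y)\le F_{c_1}(y)$; integrating in $x$ and using $\int_0^{\infty}\mathrm{e}^{-qxy^{\lambda}}\mathrm{d}x=(qy^{\lambda})^{-1}$ and $\int_0^{\infty}\mathrm{e}^{-xy^{\lambda}}\mathrm{d}x=y^{-\lambda}$ gives $\Phi\le\int_{\mathbb{R}^+}\mathcal{F}^{q}\le q\Phi$, which implies the lemma since $\alpha^{q}q^{-1}\le1\le q\,(\beta^{q})^{-1}\cdot\beta^{q}$ and indeed $\alpha^{q}\le q\le\beta^{q}$ for all $q>1$; the case split in $\alpha^{q},\beta^{q}$ at $q=2$ reflects the different sharp estimates for the $(q-1)$-st power in the two regimes, as in \cite[Lemma~1]{SUsmz}. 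As written, your argument proves a correct but strictly weaker two-sided estimate, not the lemma as stated.
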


Lemma \ref{L0} modifies Theorem \ref{T0} as follows.

\begin{theorem}\label{T1} {\rm(i)} Let $1<p\le q<\infty.$
The operator $\mathcal{L}$ is bounded from $L^p\langle c_1,c_2\rangle$ to $L^q$ if and only if $A_{\mathcal{L},\langle c_1,c_2\rangle}+D_{\langle c_1,c_2\rangle}<\infty.$  Moreover, \begin{equation*}\alpha_1\bigl[A_{\mathcal{L}, \langle c_1,c_2\rangle}+D_{\langle c_1,c_2\rangle}\bigr]\le \|\mathcal{L}\|_{L^p\langle c_1,c_2\rangle\to L^q}\le\beta_1\bigl[A_{\mathcal{L}, \langle c_1,c_2\rangle}+D_{\langle c_1,c_2\rangle}\bigr].\end{equation*}
{\rm(ii)} If $1<q<p<\infty$ then $\mathcal{L}$ is bounded iff $B_{\mathcal{L},\langle c_1,c_2\rangle}+D_{\langle c_1,c_2\rangle}<\infty,$ where
\begin{equation*}\alpha_2\bigl[B_{\mathcal{L},\langle c_1,c_2\rangle}+D_{\langle c_1,c_2\rangle}\bigr]\le \|\mathcal{L}\|_{L^p\langle c_1,c_2\rangle\to L^q}\le \beta_2\bigl[B_{\mathcal{L},\langle c_1,c_2\rangle}+D_{\langle c_1,c_2\rangle}\bigr].\end{equation*}
{\rm(iii)} Let $0<q<1<p<\infty.$ The Laplace operator $\mathcal{L}$ is bounded from $L^p\langle c_1,c_2\rangle$ to $L^q$ if $B_{\mathcal{L},\langle c_1,c_2\rangle}+D_{\langle c_1,c_2\rangle}<\infty.$ If $\mathcal{L}:L^p\langle c_1,c_2\rangle\to L^q$ is bounded then $\|B_q\|_{p',\langle c_1,c_2\rangle}<\infty.$ We also have
\begin{equation*}\alpha_3\,\|B_q\|_{p',\langle c_1,c_2\rangle}\le \|\mathcal{L}\|_{L^p\langle c_1,c_2\rangle\to L^q}\le \beta_3\bigl[B_{\mathcal{L},\langle c_1,c_2\rangle}+D_{\langle c_1,c_2\rangle}\bigr].\end{equation*}
{\rm(iv)} Let $0<q<1=p.$ If $\mathcal{L}$ is $L^1-L^q$--bounded then $\displaystyle\esup_{t\in\langle c_1,c_2\rangle}B_{q}(t)<\infty.$ The operator $\mathcal{L}$ is bounded from $L^1$ to $L^q$ if
$B_{q',\langle c_1,c_2\rangle}<\infty$ and $D_{\langle c_1,c_2\rangle}<\infty.$ Besides, \begin{equation*}
\alpha_4\,\esup_{t\in\langle c_1,c_2\rangle}B_{q}(t)\le \|\mathcal{L}\|_{L^1\langle c_1,c_2\rangle\to L^q}\le \beta_4 B_{q',\langle c_1,c_2\rangle}+q^{-2/q}D_{\langle c_1,c_2\rangle}.\end{equation*}
\end{theorem}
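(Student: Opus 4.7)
The plan is to use Lemma \ref{L0} to reduce the $L^q$-norm of $\mathcal{F}=\mathcal{L}(f\chi_{\langle c_1,c_2\rangle})$ to two contributions: a weighted Hardy-type integral of $F_{c_1}(y)=\int_{c_1}^y f(t)v(t)\,\mathrm{d}t$ on $\langle c_1,c_2\rangle$ against the measure $y^{-\lambda-1}\,\mathrm{d}y$, plus a boundary term $F_{c_1}^q(c_2)c_2^{-\lambda}$. The boundedness question for $\mathcal{L}$ then splits into a weighted Hardy inequality for the first piece and a direct H\"older estimate for the second. The quantity $D_{\langle c_1,c_2\rangle}$ is precisely what controls (and what is forced by) the boundary term; finiteness of the relevant $A$, $B$ or $D$ quantities also makes the hypothesis $\int\mathcal{F}^q<\infty$ of Lemma \ref{L0} easy to verify a posteriori.

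For part (i) with $1<p\le q<\infty$, the classical Muckenhoupt characterization of the Hardy operator $f\mapsto F_{c_1}$ from $L^p$ into $L^q(y^{-\lambda-1}\,\mathrm{d}y)$ on $\langle c_1,c_2\rangle$ gives
\begin{equation*}
\Bigl(\int_{c_1}^{c_2}F_{c_1}^q(y)\,y^{-\lambda-1}\,\mathrm{d}y\Bigr)^{1/q}\approx A_{\mathcal{L},\langle c_1,c_2\rangle}\,\|f\|_p,
\end{equation*}
once one observes $\int_t^{c_2}y^{-\lambda-1}\,\mathrm{d}y=\lambda^{-1}(t^{-\lambda}-c_2^{-\lambda})$. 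For the boundary term, H\"older gives $F_{c_1}(c_2)\le \|f\|_p[V_{c_1}(c_2)]^{1/p'}$, so that $F_{c_1}^q(c_2)c_2^{-\lambda}\le D_{\langle c_1,c_2\rangle}^q\|f\|_p^q$. Combining with Lemma \ref{L0} yields the upper bound. The lower bound is obtained by testing with $f_s=v^{p'-1}\chi_{(c_1,s)}$ for varying $s\in(c_1,c_2]$: this saturates $A_{\mathcal{L},\langle c_1,c_2\rangle}$ through the Hardy integral, while the choice $s=c_2$ produces the $D_{\langle c_1,c_2\rangle}$ contribution through the boundary term. Part (ii) proceeds identically but invokes the $1<q<p$ version of the Hardy inequality, which replaces the supremum $A_{\mathcal{L},\langle c_1,c_2\rangle}$ by the integral quantity $B_{\mathcal{L},\langle c_1,c_2\rangle}$.

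For part (iii) with $0<q<1<p<\infty$, sufficiency follows from the same Lemma \ref{L0} reduction combined with the known Hardy estimate in this range used to prove Theorem \ref{T0}(iii), augmented by the H\"older bound for the boundary term; necessity of $\|B_q\|_{p',\langle c_1,c_2\rangle}<\infty$ is inherited from Theorem \ref{T0}(iii) because the lower test functions in that argument can be supported inside $\langle c_1,c_2\rangle$. For part (iv) with $p=1$, Theorem \ref{p1} allows one to replace $v$ by $\bar v_0$ in the norm computation, after which the strategy behind Theorem \ref{T0}(iv) applied to the restricted kernel $\mathrm{e}^{-xy^\lambda}\chi_{\langle c_1,c_2\rangle}(y)$ produces both the necessary condition $\esup_{\langle c_1,c_2\rangle}B_q<\infty$ and the sufficient bound in terms of $B_{q',\langle c_1,c_2\rangle}$, with $D_{\langle c_1,c_2\rangle}$ again arising from the boundary term in Lemma \ref{L0}.

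The principal obstacle is the boundary term $F_{c_1}^q(c_2)c_2^{-\lambda}$, which has no counterpart in Theorem \ref{T0}. Arguing that it is both controlled from above by $D_{\langle c_1,c_2\rangle}\|f\|_p$ and recovered from below (so that finiteness of $D_{\langle c_1,c_2\rangle}$ is genuinely forced by boundedness of $\mathcal{L}$) requires careful tracking of how the Hardy integral and the boundary piece interact on the extremal test functions, most delicately in case (iii) where necessity and sufficiency are separated by a Hardy-type gap. The remaining work is bookkeeping of the explicit constants $\alpha_j,\beta_j$ inherited from Theorem \ref{T0} and Lemma \ref{L0}.
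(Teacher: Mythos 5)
Your proposal is correct and follows essentially the same route as the paper, which offers no detailed proof of Theorem \ref{T1} beyond the remark that ``Lemma \ref{L0} modifies Theorem \ref{T0}'': reduce $\|\mathcal{L}(f\chi_{\langle c_1,c_2\rangle})\|_q^q$ via Lemma \ref{L0} to a weighted Hardy integral against $y^{-\lambda-1}\,\mathrm{d}y$ plus the boundary term $F_{c_1}^q(c_2)c_2^{-\lambda}$, characterize the former by the known Hardy-inequality criteria, and handle the latter by H\"older from above and the test function $v^{p'-1}\chi_{(c_1,c_2)}$ from below, which is exactly what you do. The only point worth noting is that Lemma \ref{L0} is stated for $1<q<\infty$, so for parts (iii) and (iv) one needs its $0<q<1$ analogue (implicit in the machinery behind Theorem \ref{T0}(iii)--(iv)), a gap the paper glosses over in the same way.
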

\begin{remark} The constants $\alpha_i,\beta_i,$ $i=1,2,3,4,$ in Theorem \ref{T1} are defined as in Theorem \ref{T0}. Moreover, if $\langle c_1,c_2\rangle=\mathbb{R}^+$ and \begin{equation}\label{comp}{\rm(i)}\,\lim_{t\to 0}A_\mathcal{L}(t)=0,{\rm(ii)}\lim_{t\to \infty}A_\mathcal{L}(t)=0\end{equation} we have $D_{\mathbb{R}^+}=0$ and Theorem \ref{T0} has become a case of Theorem \ref{T1}.\end{remark}

Now we start to consider the Stieltjes operator $S.$ Add some denotations \begin{equation*}\mathcal{V}_{t}(\infty):=\int_{t}^\infty \frac{v^{p'}(y)\mathrm{d}y}{y^{\lambda p'}} ,\ \ W_{c_1}(t):=\int_{c_1}^t w^{q},\  \ \mathcal{W}_{t}(c_2):=\int_{t}^{c_2}\frac{w^{q}(x)\mathrm{d}x}{x^{\lambda q}} .\end{equation*}
Boundedness criteria for $S$ in Lebesgue spaces were found in \cite{And, Er, Sin}.
The following Theorem \ref{T3} contains some of them.
\begin{theorem}\label{T3} {\rm(i)} \cite[Theorem 1]{And} The operator  $S$ is bounded from $L^p$ to $L^q$ for $1<p\le q<\infty$ if and only if $A_S:=\sup_{t\in\mathbb{R}^+}A_S(t)<\infty$ with
\begin{equation*}A_S(t):=t^\lambda
\biggl(\int_{\mathbb{R}^+}
\frac{w^q(x)\mathrm{d}x}{(x^\lambda+t^\lambda)^{q}}\biggr)^{1/q}
\biggl(\int_{\mathbb{R}^+}
\frac{v^{p'}(y)\mathrm{d}y}{(t^\lambda+y^\lambda)^{p'}}\biggr)^{1/p'}. \end{equation*} Moreover, $A_S\le\|S\|_{L^p\to L^q}
\le\gamma_S\cdot A_S$
with a constant $\gamma_S$ depending on $p,$ $q$ and $\lambda$ only. If $p=1\le q<\infty$ then $\|S\|_{L^p\to L^q}\approx A_{1,S},$ where
\begin{equation*}A_{1,S}:=\sup_{t\in\mathbb{R}^+}t^\lambda
\biggl(\int_{\mathbb{R}^+}
\frac{w^q(x)\mathrm{d}x}{(x^\lambda+t^\lambda)^{q}}\biggr)^{1/q}
\esup_{y\in\mathbb{R}^+}\frac{v(y)}{t^\lambda+y^\lambda}.\end{equation*}
{\rm(ii)}\cite[Theorem 2.1]{Sin} $S$ is bounded from $L^p$ to $L^q$ for $1<q<p<\infty$ if and only if $B_S<\infty$ with
\begin{equation*}B_S:=\biggl(\int_{\mathbb{R}^+}\left(\int_{\mathbb{R}^+}
\frac{w^q(x)\mathrm{d}x}{(x^\lambda+t^\lambda)^{q}}\right)^{r/q}
\biggl(\int_{\mathbb{R}^+}
\frac{v^{p'}(y)\mathrm{d}y}{(1+[y/t]^\lambda)^{p'}}\biggr)^{r/q'}
v^{p'}(t)\mathrm{d}t\biggr)^{1/r}.\end{equation*}
Besides, $\|S\|_{L^p\to L^q}\approx B_S$ with constants of equivalence  depending, possibly, on $p,$ $q$ and $\lambda.$ If $q=1$ then \begin{equation*}\|S\|_{L^p\to L^1}=\biggl(\int_{\mathbb{R}^+}\biggl(\int_{\mathbb{R}^+}
\frac{w(x)\mathrm{d}x}{x^\lambda+t^\lambda}\biggr)^{p'}
v^{p'}(t)\mathrm{d}t\biggr)^{1/p'}.\end{equation*}\end{theorem}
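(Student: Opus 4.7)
The plan is to exploit the pointwise equivalence
\begin{equation*}
\frac{1}{x^\lambda+y^\lambda}\approx\frac{1}{\max(x^\lambda,y^\lambda)}=\min\bigl(x^{-\lambda},y^{-\lambda}\bigr),
\end{equation*}
which shows the Stieltjes kernel is, up to a constant, the sum of two Hardy kernels. Splitting the $y$--integral at $y=x$ gives
\begin{equation*}
Sf(x)\approx\frac{w(x)}{x^\lambda}\int_0^x f(y)v(y)\,\mathrm{d}y+w(x)\int_x^\infty\frac{f(y)v(y)}{y^\lambda}\,\mathrm{d}y=:H_1f(x)+H_2f(x),
\end{equation*}
where $H_1$ is a weighted direct Hardy operator and $H_2$ its dual-type companion. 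Hence $\|S\|_{L^p\to L^q}\approx\|H_1\|_{L^p\to L^q}+\|H_2\|_{L^p\to L^q}$, and the whole problem reduces to classical weighted Hardy inequalities on $\mathbb{R}^+$.

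For part (i) with $1<p\le q<\infty$, the Muckenhoupt--Bradley criterion applied to each of $H_1,H_2$ furnishes two standard $\sup$-type conditions in $v^{p'}$ and $w^q$. These two conditions combine, up to equivalence, into the single requirement $A_S<\infty$ via the identity
\begin{equation*}
\int_{\mathbb{R}^+}\frac{w^q(x)\,\mathrm{d}x}{(x^\lambda+t^\lambda)^q}\approx t^{-\lambda q}\int_0^t w^q+\int_t^\infty\frac{w^q(x)}{x^{\lambda q}}\,\mathrm{d}x
\end{equation*}
and its symmetric twin for $v^{p'}$. Necessity is obtained by inserting the (near-)extremal test function $f_t(y)=v^{p'-1}(y)(y^\lambda+t^\lambda)^{1-p'}$ into the boundedness inequality and taking the supremum in $t$. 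For $p=1\le q<\infty$, Theorem \ref{p1} permits replacement of $v$ by ${\bar v}_0$, and the $L^{p'}$ factor collapses into an essential supremum, producing $A_{1,S}$.

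For part (ii) with $1<q<p<\infty$, apply the Sawyer-type integral criterion for the Hardy $L^p\to L^q$ inequality when $q<p$ to $H_1$ and $H_2$ individually. Combining the two resulting integral conditions and again using the kernel equivalence folds them into the single expression $B_S$. The endpoint $q=1$ reduces by duality ($S^*$ is a Stieltjes operator with $v$ and $w$ interchanged) to a $p=\infty$ Hardy inequality, which yields the displayed closed form for $\|S\|_{L^p\to L^1}$.

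The main obstacle will be the bookkeeping step that identifies the \emph{pair} of Hardy conditions with the \emph{single} compact expressions $A_S$, $A_{1,S}$ and $B_S$: this requires a careful application of Fubini (or integration by parts) together with the equivalence $(x^\lambda+t^\lambda)^{-1}\approx\max(x,t)^{-\lambda}$, and the resulting equivalence constants must be tracked throughout to ensure they depend only on $\lambda$, $p$ and $q$.
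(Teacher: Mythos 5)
First, a point of orientation: the paper does not prove this theorem at all --- part (i) is imported from Andersen \cite{And} and part (ii) from Sinnamon \cite{Sin} --- and your Hardy-decomposition route is essentially the one the paper reserves for the \emph{alternative} criteria of Theorem \ref{TH}, where the answer is deliberately left in the two-condition form $A_H+A_{H^\ast}$, resp.\ $B_H+B_{H^\ast}$. Your plan therefore amounts to proving Theorem \ref{TH} first and then showing the two-condition criteria are equivalent to the one-condition quantities $A_S$ and $B_S$. For part (i) this closes up correctly, but only because of your test function: expanding $A_S(t)$ via $(x^\lambda+t^\lambda)^{-1}\approx\min(x^{-\lambda},t^{-\lambda})$ produces four terms, two of which are exactly $A_H(t)$ and $A_{H^\ast}(t)$, so $A_H+A_{H^\ast}\lesssim A_S$ and hence $\|S\|\lesssim A_S$; the reverse bound $A_S\le\|S\|$ (with constant $1$, as the theorem asserts) then comes from $f_t=v^{p'-1}(y^\lambda+t^\lambda)^{1-p'}$ \emph{provided} you also invoke the elementary inequality $t^\lambda(x^\lambda+y^\lambda)\le(x^\lambda+t^\lambda)(t^\lambda+y^\lambda)$, which you should state explicitly --- without it the test function yields only an equivalence, not the sharp lower bound. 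Note that the remaining two ``cross terms'' in the expansion of $A_S(t)$ are never shown to be dominated by $A_H+A_{H^\ast}$ directly; they don't need to be, since necessity is handled by the test function, but your phrasing (``these two conditions combine, up to equivalence, into $A_S$'') suggests you believe the identity alone does the folding. It does not.

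The genuine gap is in part (ii). There the same folding is again trivial only in one direction: the expansion of $B_S^r$ produces, besides terms comparable to $B_H^r$ and $B_{H^\ast}^r$, cross terms such as $\int_{\mathbb{R}^+}[\mathcal{W}_t(\infty)]^{r/q}\,t^{\lambda p'r/q'}[\mathcal{V}_t(\infty)]^{r/q'}v^{p'}(t)\,\mathrm{d}t$, and you offer no argument that these are controlled by $B_H^r+B_{H^\ast}^r$. Unlike in part (i), you also supply no independent test-function or duality argument giving $B_S\lesssim\|S\|$, so the necessity half of (ii) is simply not established; it is true (both quantities are equivalent to $\|S\|$), but that is precisely the content of Sinnamon's theorem rather than a consequence of the kernel identity. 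To repair this, either carry out the cross-term estimates (integration by parts exploiting the monotonicity of $V_0$, $W_0$, $\mathcal{V}_t(\infty)$, $\mathcal{W}_t(\infty)$) or test $S$ directly on the block functions used in the $q<p$ Hardy theory. The $q=1$ endpoint is fine as you describe it: dualizing to an operator on $L^\infty$ and evaluating at $f\equiv 1$ gives the stated exact equality, and correctly bypasses the Hardy decomposition, which would destroy the constant. The $p=1$ case of (i) likewise needs one more line: Theorem \ref{p=1} gives $\|S\|_{L^1\to L^q}=\esup_y v(y)\bigl\|w(\cdot)(\cdot^\lambda+y^\lambda)^{-1}\bigr\|_q$, and identifying this with $A_{1,S}$ (a supremum over the auxiliary parameter $t$) is a short but nonvacuous exchange-of-suprema argument.
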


In view of the relation \begin{equation}\label{rel}\frac{1}{2}\left[Hf(x)+H^\ast f(x)\right]\le
Sf(x)\le Hf(x)+H^\ast f(x),\hspace{5mm}f\ge 0,\end{equation}
some properties of $S$ can be interpreted through the Hardy operator \begin{equation*}
Hf(x):=x^{-\lambda}w(x)\int_0^x f(y)v(y)\mathrm{d}y\end{equation*} and its dual transformation $H^\ast f(x):=w(x)\int_x^\infty f(y)y^{-\lambda}v(y)\mathrm{d}y.$
Alternative criteria for $L^p-L^q$--boundedness of $S$, in comparison with Theorem \ref{T3},  follow from results for Hardy integral operators and cover even the case $0<q<1$ (see theorems \ref{H0} and \ref{H0ast} for details).
\begin{theorem}\label{TH} {\rm(i)} If $1<p\le q<\infty$ then
the operator $S$ is bounded from $L^p$ to $L^q$ if and only if
$A_H+A_{H^\ast}<\infty,$ where $\|S\|_{L^p\to L^q}
\approx A_{H}+A_{H^\ast}$ and \begin{eqnarray}\label{H1} A_H:=\sup_{t\in\mathbb{R}^+}
A_H(t):=\sup_{t\in\mathbb{R}^+}\bigl[V_0(t)\bigr]^{1/p'}
\bigl[\mathcal{W}_t(\infty)\bigr]^{1/q},\\\label{H2} A_{H^\ast}:=\sup_{t\in\mathbb{R}^+}
A_{H^\ast}(t):=\sup_{t\in\mathbb{R}^+}
\bigl[\mathcal{V}_t(\infty)\bigr]^{1/p'}
\bigl[W_0(t)\bigr]^{1/q}.\end{eqnarray}
{\rm(ii)} If $0<q<1<p<\infty$ or $1<q<p<\infty$ then $S:L^p\to L^q$ if and only if $B_{H}+B_{H^\ast}<\infty,$ where $\|S\|_{L^p\to L^q}
\approx B_{H}+B_{H^\ast}$ and
\begin{eqnarray*}B_H:=\biggl(\int_{\mathbb{R}^+}\bigl[V_0(t)\bigr]^{r/p'}
\bigl[\mathcal{W}_t(\infty)\bigr]^{r/p}t^{-\lambda q}
w^{q}(t)\mathrm{d}t\biggr)^{1/r},\\ B_{H^\ast}:=\biggl(\int_{\mathbb{R}^+}\bigl[\mathcal{V}_t(\infty)\bigr]^{r/p'}
\bigl[W_0(t)\bigr]^{r/p}w^{q}(t)\mathrm{d}t\biggr)^{1/r}.\end{eqnarray*} Besides, if $q>1$ then \begin{eqnarray*}B_H=(q/p')^{1/r} \biggl(\int_{\mathbb{R}^+}\bigl[V_0(t)\bigr]^{r/q'}
\bigl[\mathcal{W}_t(\infty)\bigr]^{r/p}
v^{p'}(t)\mathrm{d}t\biggr)^{1/r},\\ B_{H^\ast}=(q/p')^{1/r}\biggl(\int_{\mathbb{R}^+} \bigl[\mathcal{V}_t(\infty)\bigr]^{r/q'}
\bigl[W_0(t)\bigr]^{r/q}t^{-\lambda p'}v^{p'}(t)\mathrm{d}t\biggr)^{1/r}.\end{eqnarray*}
{\rm(iii)} Let $0<q\le 1=p.$ If $0<q<1$ then the Stieltjes transformation $S$ is $L^1-L^q$--bounded if and only if $B_{1,H}+B_{1,H^\ast}<\infty,$ where $\|S\|_{L^1\to L^q}
\approx B_{1,H}+B_{1,H^\ast}$ and
\begin{eqnarray*}B_{1,H}:=\biggl(\int_{\mathbb{R}^+}{\bar v}_0(t)^{q/(1-q)}
\bigl[\mathcal{W}_t(\infty)\bigr]^{q/(1-q)}t^{-\lambda q}
w^{q}(t)\mathrm{d}t\biggr)^{(1-q)/q},\\
B_{1,H^\ast}:=\biggl(\int_{\mathbb{R}^+}\left[{t^{-\lambda}}{\bar v}_t(\infty)\right]^{q/(1-q)}
\bigl[W_0(t)\bigr]^{q/(1-q)}w^{q}(t)\mathrm{d}t\biggr)^{(1-q)/q}.\end{eqnarray*} If $q=1$ then \begin{equation*}\|S\|_{L^1\to L^1}\approx\sup_{t\in\mathbb{R}^+}{\bar v}_0(t)
\int_{t}^\infty x^{-\lambda}w(x)\mathrm{d}x
+\sup_{t\in\mathbb{R}^+}{\bar v}_t(\infty)t^{-\lambda}
\int_0^t w(x)\mathrm{d}x.\end{equation*}\end{theorem}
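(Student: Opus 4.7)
The plan is to reduce the problem to two weighted Hardy operators $H$ and $H^\ast$ by means of the sandwich inequality \eqref{rel}, and then invoke the known $L^p$--$L^q$ boundedness criteria for Hardy operators (theorems \ref{H0} and \ref{H0ast} cited just before the statement). Because $|Sf|\le S|f|$, it suffices to test on non-negative $f$. For such $f$, the upper bound in \eqref{rel} combined with Minkowski's inequality (when $q\ge 1$) or with $(a+b)^q\le a^q+b^q$ (when $0<q\le 1$) gives
\[\|Sf\|_q\le C_q\bigl(\|Hf\|_q+\|H^\ast f\|_q\bigr),\]
while the lower bound in \eqref{rel}, together with the pointwise estimates $Hf+H^\ast f\ge Hf$ and $Hf+H^\ast f\ge H^\ast f$, yields
\[\|Sf\|_q\ge \tfrac12\max\{\|Hf\|_q,\|H^\ast f\|_q\}\ge C_q'\bigl(\|Hf\|_q+\|H^\ast f\|_q\bigr).\]
Consequently $\|S\|_{L^p\to L^q}\approx \|H\|_{L^p\to L^q}+\|H^\ast\|_{L^p\to L^q}$ with constants depending only on $p$, $q$ and $\lambda$.

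Case by case I then read off the two Hardy norms. The operator $H$ is the standard Hardy integral with outer weight $x^{-\lambda}w(x)$ and inner weight $v(y)$; the dual $H^\ast$ has outer weight $w(x)$ and inner weight $y^{-\lambda}v(y)$. In case (i), $1<p\le q<\infty$, the Muckenhoupt--Bradley criterion delivers precisely the suprema $A_H$ and $A_{H^\ast}$ of \eqref{H1}--\eqref{H2}. In case (ii), $1<q<p<\infty$ or $0<q<1<p$, the Maz'ya--Rozin--Sawyer criterion produces the integral conditions $B_H$ and $B_{H^\ast}$; when $q>1$ the two equivalent forms are obtained by an integration by parts in the outer variable, exactly the computation used in the remark after Theorem \ref{T0}. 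In case (iii), $p=1$, Theorem \ref{p1} allows the replacements $v\mapsto \bar v_0$ for $H$ and $y^{-\lambda}v(y)\mapsto \esup_{x>y}x^{-\lambda}v(x)=t^{-\lambda}\bar v_t(\infty)$ for $H^\ast$; for $0<q<1$ this yields $B_{1,H}$ and $B_{1,H^\ast}$, while for $q=1$ one obtains the sup-characterization by testing against $f_t=t^{-1}\chi_{(t,2t)}$ as in Remark \ref{sigma}.

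The only real bookkeeping is checking that the standard Hardy conditions, once the weights are shifted by the factors $x^{-\lambda}$ and $y^{-\lambda}$ arising from the kernels $x^{-\lambda}\chi_{\{y<x\}}$ and $y^{-\lambda}\chi_{\{y>x\}}$, coincide with the displayed expressions $A_H,A_{H^\ast},B_H,B_{H^\ast},B_{1,H},B_{1,H^\ast}$. I do not expect any substantive obstacle beyond this translation: the two-sided pointwise relation \eqref{rel} trivializes the reduction to Hardy theory once one is willing to absorb multiplicative constants depending only on $p$, $q$, $\lambda$, so the theorem follows from the cited Hardy criteria.
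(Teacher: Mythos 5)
Your proposal is correct and follows exactly the route the paper takes: Theorem \ref{TH} is given no separate proof there, being derived precisely from the two-sided pointwise relation \eqref{rel} together with the Hardy-operator criteria of Theorems \ref{H0} and \ref{H0ast} applied with the weight pairs $\bigl(x^{-\lambda}w(x),v(y)\bigr)$ for $H$ and $\bigl(w(x),y^{-\lambda}v(y)\bigr)$ for $H^{\ast}$. The only point worth tightening is in case (iii): the identity you write, $\esup_{x>t}x^{-\lambda}v(x)=t^{-\lambda}\bar v_t(\infty)$, is not literally true, so the passage from the $\esup$ of $\phi(y)=y^{-\lambda}v(y)$ in Theorem \ref{H0ast}(iii) to the displayed form of $B_{1,H^{\ast}}$ deserves a word of justification rather than an equals sign.
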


We conclude the paragraph by giving a general boundedness criterion for an integral operator $T$ defined by \eqref{0} and acting from $L^1$ to $L^q,$ when $1< q<\infty.$
\begin{theorem}\label{p=1}\cite[Ch. XI, \S 1.5, Theorem 4]{KA} Let $1< q<\infty.$ The operator $T$ with measurable on $\mathbb{R}^+\times\mathbb{R}^+$ kernel $k_T(x,y)\ge 0$ is bounded from $L^1$ to $L^q$ if and only if \begin{equation*}\|{\bf k}_T\|_{L^\infty[L^q]}:=\esup_{t\in\mathbb{R}^+}\|w(\cdot) k_T(\cdot,t)v(t)\|_{q}<\infty.\end{equation*} Besides, $\|T\|_{L^1\to L^q}=\|{\bf k}_T\|_{L^\infty[L^q]}.$
\end{theorem}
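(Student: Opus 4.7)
My plan is to prove both inequalities $\|T\|_{L^1\to L^q}\le \|{\bf k}_T\|_{L^\infty[L^q]}$ and $\|{\bf k}_T\|_{L^\infty[L^q]}\le \|T\|_{L^1\to L^q}$, which together yield equality. The upper bound will come from Minkowski's integral inequality and the lower bound from testing against approximate identities combined with Fatou's lemma.

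For the upper bound, suppose $M:=\|{\bf k}_T\|_{L^\infty[L^q]}<\infty$ and take $f\in L^1$. By splitting $f=f_+-f_-$ it suffices to consider $f\ge 0$. Since $1<q<\infty$, Minkowski's integral inequality gives
\begin{equation*}
\|Tf\|_q=\biggl(\int_{\mathbb{R}^+}\Bigl(\int_{\mathbb{R}^+} w(x)k_T(x,y)v(y)f(y)\,\mathrm{d}y\Bigr)^{q}\mathrm{d}x\biggr)^{1/q}\le \int_{\mathbb{R}^+}\|w(\cdot)k_T(\cdot,y)v(y)\|_q\,f(y)\,\mathrm{d}y,
\end{equation*}
and the last integral is bounded by $M\|f\|_1$, so $\|T\|_{L^1\to L^q}\le M$.

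For the lower bound, set $G(x,y):=w(x)k_T(x,y)v(y)$, which is measurable on $\mathbb{R}^+\times\mathbb{R}^+$ by hypothesis, and $g(y):=\|G(\cdot,y)\|_q$, which is measurable in $y$ by Tonelli. Fix a point $t_0\in\mathbb{R}^+$ and, for small $r>0$, test with $f_{t_0,r}(y):=(2r)^{-1}\chi_{(t_0-r,t_0+r)\cap\mathbb{R}^+}(y)$, which has $\|f_{t_0,r}\|_1=1$. Then
\begin{equation*}
Tf_{t_0,r}(x)=\frac{1}{2r}\int_{t_0-r}^{t_0+r}G(x,y)\,\mathrm{d}y.
\end{equation*}
For a.e.\ $x$ the function $y\mapsto G(x,y)$ is locally integrable, so by the Lebesgue differentiation theorem $Tf_{t_0,r}(x)\to G(x,t_0)$ as $r\to 0$ for a.e.\ $x$, provided $t_0$ is a common Lebesgue point. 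By Fatou's lemma and the boundedness assumption,
\begin{equation*}
g(t_0)^q=\int_{\mathbb{R}^+}G(x,t_0)^q\,\mathrm{d}x\le \liminf_{r\to 0}\|Tf_{t_0,r}\|_q^q\le \|T\|_{L^1\to L^q}^q.
\end{equation*}
Taking the essential supremum over $t_0$ gives $M\le \|T\|_{L^1\to L^q}$.

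The main obstacle is the joint measurability/differentiation issue: a priori the exceptional null set in Lebesgue differentiation depends on $x$, so one must justify that for a.e.\ $t_0$ the pointwise limit holds for a.e.\ $x$ simultaneously. The cleanest remedy is to view $y\mapsto G(\cdot,y)$ as an $L^q$-valued strongly measurable function and invoke the Lebesgue differentiation theorem for Bochner integrals, or equivalently to approximate $G$ by simple functions in $y$ and combine with Fatou. Once this is in place, the rest of the argument is routine; the fact that the two bounds are attained simultaneously explains the equality (rather than mere equivalence) asserted in the statement.
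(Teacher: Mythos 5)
The paper does not prove this statement at all: it is quoted verbatim from Kantorovich--Akilov \cite[Ch.~XI, \S 1.5, Theorem 4]{KA}, so there is no internal proof to compare against; your proposal is a self-contained proof of a cited classical fact. On the merits, your argument is essentially the standard one and is sound. The upper bound via Minkowski's integral inequality is correct as written (and you do not even need the splitting $f=f_+-f_-$, since $|Tf|\le\int w(x)k_T(x,y)v(y)|f(y)|\,\mathrm{d}y$ pointwise). For the lower bound you correctly identify the only delicate point, namely that the exceptional set in the Lebesgue differentiation step depends a priori on $x$. Your proposed remedy via the Bochner--Lebesgue differentiation theorem has one remaining wrinkle: to apply it you need $y\mapsto G(\cdot,y)$ to be \emph{locally Bochner integrable} as an $L^q$-valued map, i.e.\ $\int_I\|G(\cdot,y)\|_q\,\mathrm{d}y<\infty$ on bounded intervals, and the boundedness of $T$ only gives $\|\int_I G(\cdot,y)\,\mathrm{d}y\|_q<\infty$, which is the norm of the integral rather than the integral of the norm. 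This is easily repaired by first truncating, $G_N:=\min(G,N)\chi_{[0,N]\times[0,N]}$, for which the vector-valued map is bounded and compactly supported, running your argument to get $\|G_N(\cdot,t_0)\|_q\le\|T\|_{L^1\to L^q}$ for a.e.\ $t_0$ (using $G_N\le G$ and positivity of the kernel), and then letting $N\to\infty$ by monotone convergence; this is in the spirit of the ``approximate by simple functions'' alternative you mention. With that amendment the two bounds combine to give the asserted equality and the ``if and only if'' claim, so the proof is complete.
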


\subsection{Compactness}
Suppose $I\subset\mathbb{R}^+$ and $J\subset\mathbb{R}^+$ are intervals of finite Lebesgue measure, that is ${\rm mes}\,I:=\int_I \mathrm{d}x<\infty$ and ${\rm mes}\,J<\infty.$

Let $K$ be an integral operator from $L^p(I)$ to $L^q(J)$ of the form \begin{equation*}Kf(x)=\int_Ik(x,y)f(y)\mathrm{d}y,\hspace{1cm}x\in J.\end{equation*} Assume $K_0:L^p(I)\to L^q(J)$ is a positive operator \begin{equation*}K_0f(x)=\int_Ik_0(x,y)f(y)\mathrm{d}y,\hspace{1cm}x\in J,\end{equation*} such that $Kf\le K_0|f|.$ Then $K$ is called {\it a regular operator} (see \cite[\S\,2.2]{KZPS} or \cite[Definition 3.5]{RS}) with a majorant operator $K_0$. Note that the last inequality ensues from the expression $|k(x,y)|\le k_0(x,y)$ (see \cite[\S\,5.6]{KZPS}).

Below we adduce two remarkable results on compactness of linear regular integral operators by M.A. Krasnosel'skii, P.P. Zabreiko, E.I. Pustyl'nik and P.E. Sobolevskii \cite{KZPS}. The first theorem gives conditions for the compactness of integral operators from $L^p(I)$ to $L^q(J),$ when $0< q\le 1<p<\infty.$

\begin{theorem}\label{T2} \cite[p. 94, Theorem 5.8]{KZPS} Every linear regular integral operator $K:L^p(I)\to L^q(J)$  is compact, when $0< q\le 1<p<\infty.$
\end{theorem}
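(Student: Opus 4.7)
My plan is a three-stage approximation argument.

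First, I would reduce to the case of a non-negative kernel. Splitting $k = k^+ - k^-$ into positive and negative parts, both associated operators $K^{\pm}$ are positive integral operators satisfying $K^{\pm} \le K_0$, hence bounded from $L^p(I)$ to $L^q(J)$. Since $K = K^+ - K^-$, it suffices to prove the theorem separately for $K^+$ and $K^-$, and I may assume $k \ge 0$.

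Second, I would approximate $K$ by operators with bounded kernels. For each $n \ge 1$ set $k_n := k \wedge n$, a bounded measurable kernel on the finite-measure rectangle $J \times I$. Standard arguments (Hilbert--Schmidt theory on $L^2(I)\to L^2(J)$, combined with the continuous Lebesgue inclusions $L^p \hookrightarrow L^r$ available on finite-measure intervals for $r \le p$, together with approximation of $k_n$ by simple tensor kernels $\sum_i a_i \chi_{A_i}(x)\chi_{B_i}(y)$) show that the operator $K_n$ associated with $k_n$ is compact from $L^p(I)$ to $L^q(J)$.

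Third, I would prove $\|K - K_n\|_{L^p\to L^q} \to 0$. The remainder kernel $(k - n)_+$ tends to $0$ pointwise a.e., is dominated by $k$, and generates a positive operator. Combined with boundedness of $K$, the reflexivity of $L^p(I)$ (so that bounded sequences admit weakly convergent subsequences with a.e.\ convergent further subsequences of the images), and the continuous embedding $L^1(J)\hookrightarrow L^q(J)$ valid since $J$ has finite measure and $q \le 1$, one should deduce that the image of the unit ball of $L^p$ under $K - K_n$ shrinks uniformly as $n\to\infty$, yielding the operator-norm convergence. A uniform limit of compact operators being compact then completes the argument.

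The chief obstacle is the third stage. For $0 < q < 1$, $L^q(J)$ is only a quasi-Banach space, so the standard Minkowski and duality arguments that would make this tail estimate routine are unavailable. One has to exploit positivity of the kernel together with finite measure of $I$ and $J$ more carefully, possibly by extracting an a.e.\ pointwise convergent subsequence from $\{K f_n\}$ (justified by reflexivity of $L^p$ and the pointwise control $|Kf_n(x)| \le K_0|f_n|(x)$) and invoking a Vitali-type equi-integrability argument tailored to the quasi-Banach setting.
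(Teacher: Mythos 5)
The paper does not actually prove this statement: it is quoted directly from Krasnosel'skii--Zabreiko--Pustyl'nik--Sobolevskii \cite{KZPS}, so your proposal has to be judged against the classical proof there (an adaptation of which the author sketches later, in the sufficiency part of Theorem \ref{lapa}(iii)--(iv)). Your first two stages are sound: the reduction to $k\ge 0$ is harmless, and for a kernel bounded by $n$ on the finite-measure rectangle $J\times I$ one has $\bigl\|\,\|k_n(x,\cdot)\|_{p'}\bigr\|_{q}\le n\,({\rm mes}\,I)^{1/p'}({\rm mes}\,J)^{1/q}<\infty$, so compactness of $K_n$ follows from the mixed-norm criterion of Lemma \ref{L2} when $q\ge 1$, and from the same tensor-kernel approximation run with the $q$-th-power triangle inequality when $q<1$.

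The genuine gap is exactly where you place it, in stage three, and the tools you list do not close it. The assertion $\|K-K_n\|_{L^p\to L^q}\to 0$ is an order-continuity property of the operator quasi-norm that is nowhere justified; for $0<q<1$ a regular operator can be bounded into $L^q(J)$ without being bounded into $L^1(J)$ (take $k_0(x,y)=\phi(x)$ with $\phi\in L^q(J)\setminus L^1(J)$), so the function $(K-K_n)f$ need not belong to $L^1(J)$ at all and the embedding $L^1(J)\hookrightarrow L^q(J)$ you invoke has nothing to act on. Likewise, weak convergence of $f_j$ in $L^p$ together with $k(x,\cdot)\in L^{p'}(I)$ does give a.e.\ convergence of $Kf_j$, but upgrading this to $L^q$-convergence requires precisely the equi-absolute continuity of the quasi-norms of $\{|Kf_j|^q\}$ that you defer to an unspecified Vitali-type argument; that equi-integrability is the whole content of the theorem. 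The proof in \cite{KZPS} obtains it by truncating the \emph{functions} rather than the kernel: on ${\mathcal M}_h=\{f\colon|f|\le h\}$ one has the order bound $|Kf|\le h\,K_0\chi_I$ with $K_0\chi_I\in L^q(J)$ (this is where ${\rm mes}\,I<\infty$ enters), which yields equi-integrability for free and hence compactness of $K$ on ${\mathcal M}_h$ via compactness in measure, while the complementary piece $f\chi_{\{|f|>h\}}$ has small norm uniformly on the unit ball of $L^p$ because $p>1$. Truncating the kernel produces no such order bound on the remainder, so your architecture rests on a fact ($K_n\to K$ in operator norm) that is at least as deep as the compactness you are trying to prove.
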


The second theorem is on compactness of linear regular integral operators with compact majorants. The statement as well as Theorem \ref{T2} is true for any $I\subseteq\mathbb{R}^+$ and $J\subseteq\mathbb{R}^+.$

\begin{theorem}\label{TD}  \cite[p. 97, Theorem 5.10]{KZPS} Take $|k(x,y)|\le k_0(x,y)$ and suppose $K_0$ is compact from $L^p(I)$ to $L^q(J)$ for $1<p\le\infty$ and $1\le q<\infty.$ Then the operator $K:L^p(I)\to L^q(J)$ is compact as well.
\end{theorem}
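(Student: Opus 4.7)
The plan is to exhibit $K$ as the operator-norm limit of a sequence of compact integral operators $K^{(n)}$ obtained by truncating the kernel $k$ to a bounded piece supported on a set of finite measure; the truncation error will be controlled pointwise by the analogous truncation of the dominating kernel $k_0$, so that the compactness of $K_0$ forces $K-K^{(n)}$ to go to zero in operator norm.

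Concretely, fix increasing sequences $J_n\uparrow J$, $I_n\uparrow I$ of subsets of finite Lebesgue measure, and let $E_n:=\{(x,y)\in J_n\times I_n:k_0(x,y)\le n\}$. Define $k_0^{(n)}:=k_0\chi_{E_n}$, $k^{(n)}:=k\chi_{E_n}$, and write $K_0^{(n)}$, $K^{(n)}$ for the corresponding integral operators. Each $k^{(n)}$ is bounded with support of finite measure, so approximating it in a suitable mixed Lebesgue norm on $E_n$ by step functions $\sum_ic_i\chi_{A_i\times B_i}$ produces finite-rank operators converging in the $L^p(I)\to L^q(J)$ norm to $K^{(n)}$, which is therefore compact. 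The pointwise inequality $|k-k^{(n)}|=|k|\chi_{E_n^c}\le k_0\chi_{E_n^c}=k_0-k_0^{(n)}$ gives
\begin{equation*}
|(K-K^{(n)})f(x)|\le(K_0-K_0^{(n)})|f|(x),\qquad f\in L^p(I),
\end{equation*}
and hence $\|K-K^{(n)}\|_{L^p(I)\to L^q(J)}\le\|K_0-K_0^{(n)}\|_{L^p(I)\to L^q(J)}$; if the right-hand side tends to zero, $K$ is a norm limit of compact operators and we are done.

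The hard part is showing $\|K_0-K_0^{(n)}\|_{L^p(I)\to L^q(J)}\to 0$. For each fixed $f\in L^p(I)$ dominated convergence yields $(K_0-K_0^{(n)})f\to 0$ in $L^q(J)$, so $K_0^{(n)}\to K_0$ strongly; the real task is to make this convergence uniform over the unit ball, which is where compactness of $K_0$ enters in an essential way. Since $q<\infty$, $L^q(J)$ is an order-continuous Banach lattice, $K_0(B_{L^p(I)})$ is totally bounded in $L^q(J)$, and $0\le K_0^{(n)}\uparrow K_0$ as positive operators; together these ingredients produce the Dodds--Fremlin phenomenon that upgrades strong convergence of a monotone net dominated by a compact operator into operator-norm convergence. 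A more hands-on route uses the Kolmogorov--Riesz criterion to extract uniform equi-integrability and tightness of $K_0(B_{L^p(I)})$ in $L^q(J)$, and then estimates the three portions of $(K_0-K_0^{(n)})f$ supported on $J\setminus J_n$, on $\{(x,y):y\in I\setminus I_n\}$, and on $\{k_0>n\}$ separately using the uniform bound $\|K_0^{(n)}\|\le\|K_0\|$. Once $\|K_0-K_0^{(n)}\|\to 0$ is in hand, the previous steps close the proof.
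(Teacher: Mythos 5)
First, a point of reference: the paper does not prove this statement at all --- it is quoted with a citation to \cite[p.~97, Theorem 5.10]{KZPS} --- so there is no in-paper proof to compare against. Your skeleton (truncate the kernel to $k\chi_{E_n}$, show each truncation is compact by step-function approximation in the mixed norm, dominate the error pointwise by $K_0-K_0^{(n)}$, and reduce everything to $\|K_0-K_0^{(n)}\|\to 0$) is the correct classical skeleton, and the first three steps are sound: $k^{(n)}$ is bounded and supported on a finite-measure rectangle, so $\bigl\|\|k^{(n)}(x,\cdot)\|_{p'}\bigr\|_q<\infty$ (this uses $p'<\infty$ and $q<\infty$) and $K^{(n)}$ is a norm limit of finite-rank operators exactly as in Lemma \ref{L2}; the domination $|(K-K^{(n)})f|\le (K_0-K_0^{(n)})|f|$ then correctly transfers the whole problem to the majorant.

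The gap is precisely in the step you yourself flag as the hard part. Route (a) is not an available off-the-shelf theorem: the Dodds--Fremlin theorem concerns compactness of a single dominated operator, not operator-norm convergence of a monotone sequence, and its standard hypotheses (order continuity of the norm on the \emph{dual} of the domain) fail for $p=\infty$, which your statement must cover. Route (b) isolates the right three pieces, and the first two can indeed be closed: tightness of the totally bounded set $K_0(B^+)$ ($B^+$ the positive part of the unit ball) handles the portion over $J\setminus J_n$, and for $y\in I\setminus I_n$ one has $\|K_0 P_n\|=\|P_nK_0^\ast\|\to 0$, where $P_n$ is multiplication by $\chi_{I\setminus I_n}$, because $K_0^\ast(B)$ is totally bounded in $L^{p'}$ and $P_n\to 0$ strongly there. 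But for the third piece, with kernel $k_0\chi_{\{k_0>n\}}$, the phrase ``using the uniform bound $\|K_0^{(n)}\|\le\|K_0\|$'' proves nothing: a uniform bound on the truncations cannot show that their distance to $K_0$ is small, and this piece is the actual crux of the theorem. The missing idea is the following. Since $k_0\ge 0$ and $K_0f$ is finite a.e.\ for every $f\in L^p$, one has $k_0(x,\cdot)\in L^{p'}(I)$ for a.e.\ $x$ (a standard fact for nonnegative kernels), whence
\begin{equation*}
\sup_{\|f\|_p\le 1}\int_{\{y\colon k_0(x,y)>n\}}k_0(x,y)|f(y)|\,\mathrm{d}y\le\bigl\|k_0(x,\cdot)\chi_{\{k_0(x,\cdot)>n\}}\bigr\|_{p'}=:\eta_n(x)\downarrow 0\quad\text{for a.e. }x
\end{equation*}
by dominated convergence (here $p'<\infty$, i.e.\ $p>1$, is essential). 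This gives a majorant $\eta_n$ independent of $f$ tending to zero a.e., and Vitali's theorem combined with the equi-absolute continuity and tightness of $\{(K_0f)^q\colon f\in B^+\}$ (a consequence of total boundedness of $K_0(B^+)$ in $L^q$ with $q<\infty$) upgrades this to $\sup_{\|f\|_p\le1}\|(K_0-K_0^{(n)})f\|_q\to0$. Without an argument of this kind the proof is incomplete at its central point; with it, the hypotheses $p>1$ and $q<\infty$ enter exactly where they must.
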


The next theorem is on relative compactness of a subset of $L^q(\mathbb{R}^+),$ $1\le q<\infty.$
\begin{theorem}\label{Ada}  \cite[Theorem 2.21]{Ad} Let $1\le q<\infty$ and $\Omega$ be a bounded subset of $L^q(\mathbb{R}^+).$ Then $\Omega$ is relatively compact if and only if for all $\varepsilon>0$ there exists $\delta>0$ and a subset $I\subset \mathbb{R}^+$ such that for every $g\in\Omega$ and every $h\in\mathbb{R}^+$ with $h<\delta$ \begin{equation*}\int_{\mathbb{R}^+}\left|g(x+h)-g(x)\right|^q\mathrm{d}x<\varepsilon^q, \hspace{10mm} \int_{\mathbb{R}^+\setminus I}\left|g(x)\right|^q\mathrm{d}x<\varepsilon^q.\end{equation*}
\end{theorem}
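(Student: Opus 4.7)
The plan is to establish both directions of this Kolmogorov--Riesz--Fr\'echet characterization of relative compactness in $L^q(\mathbb{R}^+)$. Necessity is the easier half and follows from total boundedness together with density of $C_c(\mathbb{R}^+)$ in $L^q(\mathbb{R}^+)$; sufficiency requires a smoothing argument that reduces matters to Arzel\`a--Ascoli on a bounded interval.

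For necessity, given $\varepsilon>0$, I would fix a finite $(\varepsilon/3)$-net $\{g_1,\dots,g_N\}\subset L^q(\mathbb{R}^+)$ for $\Omega$. Each individual $g_i$ can be approximated in $L^q$ by compactly supported continuous functions, so $\|g_i(\cdot+h)-g_i\|_q\to 0$ as $h\to 0$ and each $g_i$ has $L^q$-mass less than $(\varepsilon/3)^q$ outside some bounded set $I_i\subset\mathbb{R}^+$. Choosing a single $\delta>0$ and $I:=\bigcup_{i=1}^{N}I_i$ that work simultaneously for the whole finite family, then transporting the estimates from the net to an arbitrary $g\in\Omega$ via the triangle inequality with $\|g-g_i\|_q<\varepsilon/3$, yields both required conditions.

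For sufficiency, the device is the Steklov average $g_h(x):=h^{-1}\int_x^{x+h}g(t)\,\mathrm{d}t$ for small $h>0$. From $g_h(x)-g(x)=h^{-1}\int_0^h[g(x+t)-g(x)]\,\mathrm{d}t$ together with Minkowski's integral inequality, the translation hypothesis gives $\|g_h-g\|_q<\varepsilon$ uniformly over $g\in\Omega$ once $h<\delta$. Combined with the tail bound, this reduces relative compactness of $\Omega$ in $L^q(\mathbb{R}^+)$ to relative compactness of the restricted family $\{g_h\text{ on }I:g\in\Omega\}$ in $L^q(I)$ for a single fixed small $h$. On the bounded interval $I$ the functions $g_h$ are uniformly bounded in sup norm (by $h^{-1/q}\sup_{g\in\Omega}\|g\|_q$ via H\"older when $q>1$, and by $h^{-1}\sup_{g\in\Omega}\|g\|_1$ when $q=1$), and the pointwise estimate $|g_h(x)-g_h(y)|\le h^{-1/q}\|g(\cdot+(y-x))-g\|_q$ coupled with the uniform translation hypothesis delivers equicontinuity of the restricted family uniform over $\Omega$. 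Arzel\`a--Ascoli therefore yields relative compactness of $\{g_h\}$ in $C(\overline{I})$, hence in $L^q(I)$. A finite $\varepsilon$-net there, reassembled with the tail bound and the approximation $\|g_h-g\|_q<\varepsilon$, produces a finite $3\varepsilon$-net for $\Omega$ in $L^q(\mathbb{R}^+)$, giving total boundedness and thus, by completeness of $L^q(\mathbb{R}^+)$, relative compactness.

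The main obstacle will be organizing the sufficiency direction so that all the smallness estimates --- for $g_h-g$, for the Arzel\`a--Ascoli step on the smoothed family, and for the tail --- combine cleanly into a single uniform $3\varepsilon$-net. The equicontinuity of the Steklov averages uniformly over $\Omega$ is the technical heart: the translation hypothesis must be invoked only after reducing to the smoothed functions, because it is the smoothing that converts the $L^q$-translation modulus supplied by hypothesis into the pointwise modulus of continuity required by Arzel\`a--Ascoli.
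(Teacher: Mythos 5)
The paper does not prove this statement at all --- it is quoted verbatim as a known result from Adams' \emph{Sobolev Spaces} (Theorem 2.21) and used as a black box later on, so there is no internal proof to compare against. Your argument is the standard Kolmogorov--Riesz--Fr\'echet proof (totally-bounded net plus density of $C_c$ for necessity; Steklov averaging, Arzel\`a--Ascoli on $\overline{I}$, and reassembly into a finite net for sufficiency) and is correct, matching in substance the proof in the cited reference; the only point worth making explicit is that the Arzel\`a--Ascoli step requires $I$ to be bounded, which is implicit in the statement's intent.
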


We finalize the section by giving a statement obtained in \cite[Lemma 4]{LomStep} for Banach function spaces $X$ and $Y$ with absolutely continuous (AC) norms. A space $X$ has AC-norm if for all $f\in X,$ $\|f\chi_{E_n}\|_X\to 0$ for every sequence of sets $\{E_n\}\subset\mathbb{R}^+$ such that $\chi_{E_n}(x)\to 0$ a.e. We shall apply this statement when $X=L^p,$ $1<p<\infty,$ and  $Y=L^q,$ $1\le q<\infty.$
\begin{lemma}\label{L2}\cite[Lemma 4]{LomStep}
Let $1<p<\infty,$ $1\le q<\infty$ and $T\colon L^p\to L^q$ be an integral operator of the form \eqref{0}. $T$ is compact if \begin{equation}\label{XY}
M_T:=\Bigl\|\bigl\|w(x)k(x,\cdot)v(\cdot)\bigr\| _{p'}\Bigr\|_{q}<\infty.
\end{equation}
\end{lemma}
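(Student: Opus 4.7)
The plan is to realise $T$ as a uniform limit of finite-rank operators from $L^p$ to $L^q$, which then forces compactness. The starting observation is that $M_T$ controls the operator norm: for any $f\in L^p$, Hölder's inequality applied inside \eqref{0} gives
\begin{equation*}
|Tf(x)|\le w(x)\|k_T(x,\cdot)v(\cdot)\|_{p'}\|f\|_p,
\end{equation*}
and taking the $L^q$ norm in $x$ yields $\|Tf\|_q\le M_T\|f\|_p$. The same estimate applies to any integral operator of the form \eqref{0} with $M_T$ replaced by the mixed norm of its kernel.

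Set $K(x,y):=w(x)k_T(x,y)v(y)$ and view $K$ as an element of the mixed Lebesgue space $L^q_x(L^{p'}_y)$ on $\mathbb{R}^+\times\mathbb{R}^+$. Since both $1\le q<\infty$ and $1<p'<\infty$, simple functions of the form $\sum_{i=1}^{N}c_i\chi_{A_i}(x)\chi_{B_i}(y)$ with $A_i,B_i\subset\mathbb{R}^+$ of finite Lebesgue measure are dense in this space: one truncates to a large rectangle using the absolute continuity of both component norms, then approximates the truncated kernel by step functions in the usual way. Choose such a sequence $K_n$ with $\|K-K_n\|_{L^q_x(L^{p'}_y)}\to 0$.

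Each $K_n$ determines a finite-rank integral operator
\begin{equation*}
T_nf(x)=\int_{\mathbb{R}^+}K_n(x,y)f(y)\,\mathrm{d}y=\sum_{i=1}^{N_n}c_{i}\chi_{A_{i}}(x)\int_{B_{i}}f(y)\,\mathrm{d}y,
\end{equation*}
whose range lies in the finite-dimensional span of $\{\chi_{A_i}\}$; hence each $T_n$ is compact. Applying the boundedness estimate from the first step to the kernel $K-K_n$ gives
\begin{equation*}
\|T-T_n\|_{L^p\to L^q}\le\|K-K_n\|_{L^q_x(L^{p'}_y)}\to 0,
\end{equation*}
so $T$ is compact as a uniform limit of compact operators.

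The main technical point is verifying the density of step-kernels in $L^q_x(L^{p'}_y)$. This is classical but relies crucially on both $q<\infty$ and $p'<\infty$, which are exactly the standing hypotheses $1\le q<\infty$ and $1<p<\infty$; the argument would fail at $p=1$ (where $p'=\infty$ and simple functions are not dense in $L^\infty$ in the norm topology), which is consistent with the absolute continuity of norms assumed immediately before the lemma. As an alternative route, one could avoid the abstract mixed-norm density altogether by truncating $K$ to a bounded rectangle by dominated convergence and then verifying equi-integrability and $L^q$-equicontinuity of $\{Tf:\|f\|_p\le 1\}$ via Theorem \ref{Ada}, but the finite-rank approximation above seems shorter.
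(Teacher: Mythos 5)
Your argument is correct, but note that the paper itself gives no proof of Lemma \ref{L2}: the statement is imported verbatim from \cite[Lemma 4]{LomStep}, where it is proved for general Banach function spaces $X,Y$ with absolutely continuous norms. What you supply is the classical Hille--Tamarkin-type argument specialized to $X=L^p$, $Y=L^q$: H\"older's inequality gives $\|T\|_{L^p\to L^q}\le M_T$ (and the same bound for any kernel measured in the mixed norm), step kernels are dense in $L^q_x(L^{p'}_y)$ precisely because $q<\infty$ and $p'<\infty$, each step kernel induces a finite-rank operator, and $T$ is then an operator-norm limit of compact operators into the Banach space $L^q$. This is essentially the same mechanism as in \cite{LomStep}, where absolute continuity of the norms plays the role that the finiteness of $q$ and $p'$ plays in your version; your route buys a self-contained elementary proof at the cost of generality, and you correctly flag that it breaks down at $p=1$. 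Two points worth making explicit if you write this up: (a) in the density step a general measurable subset of the product is not a rectangle, so after truncating $K$ to a bounded kernel supported in a bounded rectangle you should approximate it in $L^{s}$ of the product measure with $s=\max(q,p')$, which dominates the mixed norm on a set of finite measure, by finite sums $\sum_i c_i\chi_{A_i\times B_i}$; (b) passing from norm convergence of compact operators to compactness of the limit uses completeness of $L^q$, i.e. $q\ge 1$. Both are routine, and neither affects the validity of your proof.
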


\section{COMPACTNESS OF THE LAPLACE TRANSFORM}

\begin{theorem}\label{lapa}
{\rm(i)} If $1<p\le q<\infty$ then the operator $\mathcal{L}:L^p\to L^q$ is compact if and only if
$A_\mathcal{L}<\infty$ and the conditions \eqref{comp} are fulfilled. \\
{\rm(ii)} Let $1\le q<p<\infty.$ If $q>1$ then $\mathcal{L}:L^p\to L^q$ is compact if and only if $B_\mathcal{L}<\infty.$ If $q=1$ then $\mathcal{L}$ is compact if and only if $B_p<\infty.$\\
{\rm(iii)} Let $0<q<1<p<\infty.$ The operator $\mathcal{L}:L^p\to L^q$ is compact if $B_\mathcal{L}<\infty.$ If $\mathcal{L}$ is compact from $L^p$ to $L^q$ then $\|B_{q}\|_{p'}<\infty.$ \\
{\rm(iv)} Let $0<q<1=p.$ $\mathcal{L}$ is compact from $L^1$ to $L^q$ if $B_{q'}<\infty.$ If $\mathcal{L}$ is $L^1-L^q$--compact then $\esup_{t\in\mathbb{R}^+}B_q(t)<\infty.$\\
{\rm(v)} Let $1=p\le q<\infty.$ The operator $\mathcal{L}$ is $L^1-L^q$-compact if and only if
\begin{equation*}\esup_{t\in\mathbb{R}^+}{\bar B}_q(t)<\infty\ \textrm{and}\ \lim_{t\to 0}{\bar B}_q(t)=\lim_{t\to\infty}{\bar B}_q(t)=0.\end{equation*}
\end{theorem}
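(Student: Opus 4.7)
The plan, matching the introduction's promise, is to decompose $\mathcal{L}=\mathcal{L}_{(0,a)}+\mathcal{L}_{[a,b]}+\mathcal{L}_{(b,\infty)}$, where $\mathcal{L}_E f(x):=\int_E\mathrm{e}^{-xy^\lambda}f(y)v(y)\,\mathrm{d}y$, and to show that the two tail operators have norm tending to $0$ as $a\to 0^+$ and $b\to\infty$ while the middle operator is compact. Since compactness is preserved under operator-norm limits, this yields sufficiency; necessity will be extracted either from compact~$\Rightarrow$~bounded combined with Theorem~\ref{T0}, or by feeding weakly null test sequences concentrated near $0$ or $\infty$ into $\mathcal{L}$.

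For case~(i), Theorem~\ref{T1}(i) gives $\|\mathcal{L}_{(0,a)}\|_{L^p\to L^q}\lesssim A_{\mathcal{L},(0,a)}+D_{(0,a)}$ and $\|\mathcal{L}_{(b,\infty)}\|_{L^p\to L^q}\lesssim A_{\mathcal{L},(b,\infty)}$, and conditions~\eqref{comp} together with $A_\mathcal{L}<\infty$ drive both to zero as $a\to 0$ and $b\to\infty$ (note $D_{(0,a)}=A_\mathcal{L}(a)$ and $D_{(b,\infty)}=0$). The middle block $\mathcal{L}_{[a,b]}:L^p\to L^q$ is compact by Lemma~\ref{L2}, since $M_{\mathcal{L}_{[a,b]}}\le\|v\|_{p',[a,b]}\,\|\mathrm{e}^{-a^\lambda x}\|_{q,\mathbb{R}^+}<\infty$. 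Case~(ii) is identical with Theorem~\ref{T1}(ii) replacing (i); here $B_{\mathcal{L},(0,a)}+B_{\mathcal{L},(b,\infty)}\to 0$ follows automatically from $B_\mathcal{L}<\infty$ by absolute continuity of the integral, so no supplementary limit hypothesis is needed. The sub-case $p>q=1$ is handled by the identity $\|\mathcal{L}\|_{L^p\to L^1}=B_p$ of Theorem~\ref{T0}(ii) applied to the tails. For necessity in (i)--(ii), compactness implies boundedness, so Theorem~\ref{T0} forces finiteness of the global constant; the decay conditions~\eqref{comp} in (i) are then obtained by plugging $f_{t_n}(y):=v(y)^{p'-1}\chi_{(t_n,2t_n)}(y)/\|v^{p'-1}\chi_{(t_n,2t_n)}\|_p$ into $\mathcal{L}$ (cf.\ Remark~\ref{sigma}), which is $L^p$-normalised and weakly null as $t_n\to 0$ or $\infty$, so $\|\mathcal{L} f_{t_n}\|_q\to 0$ and the lower bounds underlying Theorem~\ref{T0}(i) force $A_\mathcal{L}(t_n)\to 0$.

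Case~(iii) is painless: by Theorem~\ref{T2}, every bounded regular integral operator $L^p\to L^q$ with $0<q\le 1<p<\infty$ is automatically compact, so compactness and boundedness are equivalent, and the stated conditions are transcribed directly from Theorem~\ref{T0}(iii). Case~(v), $p=1\le q$, uses Theorem~\ref{p=1} (for $q>1$) or Theorem~\ref{T0}(iv) with $q=1$, together with Theorem~\ref{p1}, to obtain $\|\mathcal{L}\|_{L^1\to L^q}\approx\esup_t\bar B_q(t)$; the tail norms are then bounded by $\esup_{(0,a)\cup(b,\infty)}\bar B_q$, which the decay hypothesis forces to $0$. The middle block $\mathcal{L}_{[a,b]}:L^1\to L^q$ sends the unit ball to a relatively compact subset of $L^q$ via Theorem~\ref{Ada}: the family $\{\mathcal{L}_{[a,b]} f:\|f\|_1\le 1\}$ is equicontinuous in $x$ (from the uniform bound $|\partial_x\mathrm{e}^{-xy^\lambda}|\le b^\lambda\mathrm{e}^{-xa^\lambda}$ for $y\in[a,b]$) and tight in $x$ (exponential decay). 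Necessity of the two-sided decay condition in (v) comes from Remark~\ref{sigma}-type test sequences $f_t=t^{-1}\chi_{(t,2t)}$.

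The delicate case is (iv), $p=1$, $q<1$: none of Theorems~\ref{T2}, \ref{TD}, \ref{Ada} or Lemma~\ref{L2} applies. I would prove sufficiency by finite-rank approximation: on $[\varepsilon,1/\varepsilon]$ replace $\mathrm{e}^{-xy^\lambda}$ by a step function in $y$ over a fine partition, yielding a rank-$n$ (hence compact) operator $\mathcal{L}^{(n)}:L^1\to L^q$, and control $\|\mathcal{L}-\mathcal{L}^{(n)}\|_{L^1\to L^q}$ via Theorem~\ref{T0}(iv) applied both to the (small) kernel error on $[\varepsilon,1/\varepsilon]$ and to the tails $(0,\varepsilon)\cup(1/\varepsilon,\infty)$, smallness of which comes from $B_{q'}<\infty$. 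Necessity of $\esup_t B_q(t)<\infty$ is compact~$\Rightarrow$~bounded plus Theorem~\ref{T0}(iv). The main obstacle is precisely this last case: one has to verify that the splitting-and-approximation bound remains quantitatively usable in the quasi-Banach regime $q<1$, where neither duality nor a classical Kolmogorov criterion is off the shelf, and the residual gap between the sufficient ($B_{q'}$) and necessary ($\esup B_q$) conditions, mirroring the gap in Theorem~\ref{T0}(iv), is not closed here.
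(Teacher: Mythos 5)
Your overall architecture (split into a middle block plus two tails, kill the tails in norm, show the middle block is compact) is the paper's, and your treatment of (i)-sufficiency and (ii) matches it. But there are two places where a step as written would fail. First, in case (iii) you invoke Theorem \ref{T2} directly for $\mathcal{L}:L^p(\mathbb{R}^+)\to L^q(\mathbb{R}^+)$; that theorem is stated only for intervals $I,J$ of \emph{finite} Lebesgue measure, and the paper goes to some trouble to avoid exactly this: it first removes the tail $\mathcal{L}(f\chi_{[b,\infty)})$, whose norm tends to $0$ precisely because $B_{\mathcal{L}}<\infty$ (resp. $B_{q'}<\infty$ in (iv)) — this is where the sufficient condition is actually used — and then proves compactness of $\mathcal{L}(f\chi_{[0,b]})$ by an \emph{extension} of the Krasnosel'skii et al.\ argument (restriction to the sets ${\mathcal M}_h$ of bounded functions, etc.) to a target space over all of $\mathbb{R}^+$. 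Your ``compactness and boundedness are equivalent, so transcribe Theorem \ref{T0}(iii)'' shortcut therefore has no justification as stated. Second, and more seriously, your necessity argument in case (v) cannot work: the quantity to be killed is $\bar B_q(t)=t^{-\lambda/q}\,\bar v_0(t)$ with $\bar v_0(t)=\esup_{0<x<t}v(x)$, and the test functions $f_t=t^{-1}\chi_{(t,2t)}$ only produce the lower bound $t^{-\lambda/q}\cdot t^{-1}\int_t^{2t}v$, which can be far smaller than $\bar B_q(t)$ (take $v$ a spike near the origin and zero on $(t,2t)$). Moreover the weak-convergence mechanism you lean on elsewhere is unavailable for $p=1$: $t^{-1}\chi_{(t,2t)}$ is not weakly null in $L^1$. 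The paper instead runs an Edmunds--Gurka--Pick type argument, choosing $f_{k_j}$ supported where the essential supremum $\bar v_0(2^{k_j})$ is nearly attained and showing the images $\mathcal{L}_xF_{k_i}$ stay uniformly $\varepsilon$-separated in $L^q$, contradicting compactness; some substitute for this construction is indispensable.

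Two smaller points. In (i)-necessity your localized functions $v^{p'-1}\chi_{(t_n,2t_n)}$ (suitably normalized) only yield $t_n^{-\lambda/q}\bigl[V_{t_n}(2t_n)\bigr]^{1/p'}\to 0$, not $A_\mathcal{L}(t_n)=t_n^{-\lambda/q}\bigl[V_0(t_n)\bigr]^{1/p'}\to 0$; this is repairable by a dyadic summation (or, as the paper does, by using $\chi_{[0,z_k]}v^{p'-1}$ at the origin and passing to the adjoint with shrinking $L^{q'}$ test functions at infinity), but it is a real missing step. In (iv), applying Theorem \ref{T0}(iv) to the kernel error $\mathrm{e}^{-xy^\lambda}-\mathrm{e}^{-xy_j^\lambda}$ is not licensed (that theorem concerns the Laplace kernel itself); your finite-rank scheme can be salvaged by bounding the error kernel by $2\mathrm{e}^{-x\varepsilon^\lambda}\min(1,Cx\delta)$ and using $v\in L^\infty_{loc}$ directly, but you do correctly identify this case as the one requiring genuinely new work beyond the Banach-space toolkit.
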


\begin{proof} (i)
{\it Sufficiency.} Suppose $A_\mathcal{L}<\infty$ and $\lim\limits_{t\to 0}A_\mathcal{L}(t)=\lim\limits_{t\to\infty}A_\mathcal{L}(t)=0.$ Put $0<a<b<\infty$ and denote \begin{equation*}\mathcal{L}_0f:=\mathcal{L}(f\chi_{(a,b)}),\
\ \mathcal{L}_1f:=\mathcal{L}(f\chi_{[0,a]}),\
\ \mathcal{L}_2f:=\mathcal{L}(f\chi_{[b,\infty)}).\end{equation*}
Obviously, \begin{equation}\label{15}\mathcal{L}f(x)=\sum_{i=0}^2\mathcal{L}_if(x).\end{equation}
Since $A_\mathcal{L}<\infty$ then $\mathcal{L}$ is bounded
from $L^p$ to $L^q$ by Theorem \ref{T0}(a). This yields $L^p-L^q$--boundedness of the operator $\mathcal{L}_0f,$ which is regular with a majorant operator $\mathcal{L}_0|f|$ bounded from $L^p$ to $L^q$ as well.

According to Lemma \ref{L2} the operator $\mathcal{L}_0\colon L^p\to L^q$ is compact if
\begin{equation}\label{DDL0}
M_{\mathcal{L}_0}:=\Bigl\|\bigl\|\chi_{(a,b)}(\cdot)k_{\mathcal{L}}(x,\cdot)v(\cdot)\bigr\| _{p'}\Bigr\|_{q}<\infty.\end{equation}
Since $0<a<b<\infty$ and $v^{p'}\in L_{loc}(\mathbb{R}^+)$ we have
\begin{eqnarray}\label{1}M_{\mathcal{L}_0}^q\le \frac{1}{a^\lambda q}
\bigl[V_a(b)\bigr]^{q/p'}<\infty
.\end{eqnarray}
Therefore, $\mathcal{L}_0$ is compact from $L^p$ to $L^q$ for any $0<a<b<\infty.$

Now consider the operators $\mathcal{L}_i,$ $i=1,2.$ By Theorem \ref{T1}(a) we have:
\begin{eqnarray}\label{10}\|\mathcal{L}_1\|_{L^p\to L^q}\le 2\beta_1\sup_{0\le t\le a}t^{-\lambda/q}\bigl[V_0(t)\bigr]^{1/p'},\\
\label{11}\|\mathcal{L}_2\|_{L^p\to L^q}\le\beta_1\sup_{b\le t<\infty}t^{-\lambda/q}\bigl[V_b(t)\bigr]^{1/p'}.
\end{eqnarray}
The conditions \eqref{comp}
yield
\begin{eqnarray*}\lim_{a\to 0}\sup_{0\le t\le a}t^{-\lambda/q}\bigl[V_0(t)\bigr]^{1/p'}=\lim_{t\to 0}A_\mathcal{L}(t)=0,\\
\lim_{b\to\infty}\sup_{b\le t<\infty}t^{-\lambda/q}\bigl[V_b(t)\bigr]^{1/p'}\le\lim_{t\to \infty}A_\mathcal{L}(t)=0.\end{eqnarray*} Together with \eqref{10} and \eqref{11} this gives:  \begin{equation}\label{13}
\lim_{a\to 0}\|\mathcal{L}_1\|_{L^p\to L^q}=0,\hspace{1cm}
\lim_{b\to\infty}\|\mathcal{L}_2\|_{L^p\to L^q}=0.\end{equation}
Therefore, \eqref{15} implies
\begin{eqnarray}\label{limit}
\|\mathcal{L}-\mathcal{L}_0\|_{L^p\to L^q}\le\|\mathcal{L}_1\|_{L^p\to L^q}+\|\mathcal{L}_2\|_{L^p\to L^q},
\end{eqnarray} and now the operator $\mathcal{L}:L^p\to L^q$ is compact as a limit of compact operators, when $a\to 0$ and $b\to\infty.$

{\it Necessity.} Suppose now $\mathcal{L}$ is compact from $L^p$ to $L^q.$ Then $\mathcal{L}$ is $L^p-L^q$--bounded and $A_\mathcal{L}<\infty$ by Theorem \ref{T0}(a).

To prove \eqref{comp} we assume $\{z_k\}_{k\in\mathbb{Z}}\subset\mathbb{R}^+$ is an arbitrary sequence. To establish the claim (i) in \eqref{comp} suppose $\lim_{k\to\infty}z_k=0$ and put
\begin{equation*}f_k(t)=\chi_{[0,z_k]}(t)[v(t)]^{p'-1} \bigl[V_0(z_k)\bigr]^{-1/p}.\end{equation*}
Since $\|f_k\|_p=1$ then
\begin{equation*}\biggl|\int_{\mathbb{R}^+}f_k(y)g(y)\mathrm{d}y\biggr|\le
\biggl(\int_0^{z_k}|g(y)|^{p'}\mathrm{d}y\biggr)^{1/p'}\to 0,\hspace{1cm}k\to\infty,\end{equation*}
for any $g\in L^{p'}.$ Therefore, the sequence $\{f_k\}_{k\in\mathbb{Z}}$ converges weakly to $0$ in $L^p.$ Compactness of $\mathcal{L}:L^p\to L^q$ yields strong convergence of $\{\mathcal{L}f_k\}_{k\in\mathbb{Z}}$ in $L^q,$ that is $\lim_{k\to\infty}\|\mathcal{L}f_k\|_q=0.$ Besides, we have \begin{equation*}
\int_0^\infty\biggl(\int_0^\infty \mathrm{e}^{-xy^\lambda}f_k(y)v(y)\mathrm{d}y\biggr)^q\mathrm{d}x
\ge\int_0^\infty \mathrm{e}^{-qxz_k^\lambda}\mathrm{d}x\bigl[V_0(z_k)\bigr]^{q/p'}
=\frac{A_\mathcal{L}^q(z_k)}{q}.\end{equation*} Hence, $\lim_{k\to\infty}A_\mathcal{L}(z_k)=0.$ Since $\{z_k\}_{k\in\mathbb{Z}}$ is arbitrary, then \eqref{comp}(i) is proved.

For the proof of \eqref{comp}(ii) we suppose $\lim_{k\to\infty}z_k=\infty$ and put
\begin{equation*}g_k(t)=\chi_{[0,z_k^{-\lambda}]}(t)z_k^{\lambda/q'}.\end{equation*} Since $\|g_k\|_{q'}=1$ we have \begin{equation*}\biggl|\int_0^\infty f(x)g_k(x)\mathrm{d}x\biggr|
\le \biggl(\int_0^{z_k^{-\lambda}}|f(x)|^{q}\mathrm{d}x\biggr)^{1/q}\to 0,\hspace{1cm}k\to \infty,\end{equation*} for any $f\in L^q,$ which means weak convergence of $\{g_k\}_{k\in\mathbb{Z}}$ in $L^{q'}.$ Compactness of $\mathcal{L}:L^p\to L^q,$ $1<p,q<\infty,$ implies $L^{q'}-L^{p'}$--compactness of the dual operator \begin{equation*}\mathcal{L}^\ast g(y):=v(y)\int_0^\infty \mathrm{e}^{-xy^\lambda}g(x)\mathrm{d}x.\end{equation*} Therefore, $\{\mathcal{L}^\ast g_k\}_{k\in\mathbb{Z}}$ strongly converges in $L^{p'}:$
\begin{equation}\label{lim2'}\lim_{k\to\infty}\|\mathcal{L}^\ast g_k\|_{p'}=0.\end{equation} We obtain
\begin{eqnarray*}\int_0^\infty v^{p'}(y)\biggl(\int_0^\infty \mathrm{e}^{-xy^\lambda}g_k(x)\mathrm{d}x\biggr)^{p'}\mathrm{d}y \ge V_0(z_k)
\biggl(\int_0^{z_k^{-\lambda}}\mathrm{e}^{-xz_k^\lambda}\mathrm{d} x\biggr)^{p'}z_k^{\lambda p'/q'}\\\ge\mathrm{e}^{-p'}V_0(z_k)
\biggl(\int_0^{z_k^{-\lambda}}\mathrm{d}x\biggr)^{p'}z_k^{\lambda p'/q'}
=\mathrm{e}^{-p'}z_k^{-\lambda p'/q}V_0(z_k)
=\mathrm{e}^{-p'}A_\mathcal{L}^{p'}(z_k).\end{eqnarray*} Together with \eqref{lim2'} this implies $\lim\limits_{k\to\infty}A_\mathcal{L}(z_k)=0,$ and now the condition \eqref{comp}(ii) is fulfilled by the arbitrariness of $\{z_k\}_{k\in\mathbb{Z}}.$

{\it Necessity} in (ii), (iii) and (iv) follows by Theorem \ref{T0} from the hypothesis of compactness and, therefore, boundedness of $\mathcal{L}.$

(ii) {\it Sufficiency} of the conditions $B_\mathcal{L}<\infty$ (if $1<q<p<\infty$) and $B_p<\infty$ (if $q=1$) for the compactness of $\mathcal{L}$ is provided by Lemma \ref{L2} and Theorem \ref{T1}(b). Namely, if $1<q<p<\infty$ then Lemma \ref{L2} yields $L^p-L^q$-compactness of $\mathcal{L}_0$ (see \eqref{1}), while norms $\mathcal{L}_1$ and $\mathcal{L}_2$ are estimated by Theorem \ref{T1}(b) as follows: \begin{eqnarray}\|\mathcal{L}_1\|_{L^p\to L^q}\le\beta_2\max\Bigl\{2,\bigl(r/p'\bigr)^{1/r}\Bigr\}\left(\int_{\mathbb{R}^+} \chi_{[0,a]}(t)B_\mathcal{L}(t)\mathrm{d}t\right)^{1/r},\nonumber\\\label{hvostb} \|\mathcal{L}_2\|_{L^p\to L^q}\le\beta_2\left(\int_{\mathbb{R}^+} \chi_{[b,\infty)}(t)B_\mathcal{L}(t)\mathrm{d}t\right)^{1/r}.
\end{eqnarray}  Thus, the condition $B_\mathcal{L}<\infty$ and the estimate \eqref{limit} implies compactness of $\mathcal{L}$ as $a\to 0,$ $b\to\infty.$ If $q=1$ then $\mathcal{L}$ is compact by Lemma \ref{L2}.

{\it Sufficiency} in (iii) and (iv) can be established as follows. Let $B_\mathcal{L}<\infty$ if $0<q<1<p<\infty$ or $B_{q'}<\infty$ if $0<q<1=p.$ By Theorem \ref{T1}(c) we obtain the estimate \eqref{hvostb} for the case $0<q<1<p<\infty$ (with $\beta_3$ instead of $\beta_2$). By the part (iv) of the same theorem we have for $0<q<1=p$\,:
\begin{eqnarray*}\|\mathcal{L}_2\|_{L^p\to L^q}\le\beta_4\left(\int_{\mathbb{R}^+}\chi_{[b,\infty)}(t)B_{q'}(t)\mathrm{d}t\right)^{1/r}.
\end{eqnarray*} Thus, the condition $B_\mathcal{L}<\infty$ (or $B_{q'}<\infty$) yields $\|\mathcal{L}_2\|_{L^p\to L^q}\to 0$ as $b\to\infty.$

Now consider the operator $\mathcal{L}_bf:=\mathcal{L}_0f+\mathcal{L}_1f=L(f\chi_{[0,b]}).$ The hypothesis $B_\mathcal{L}<\infty$ (or $B_{q'}<\infty$) suffices for the boundedness of $\mathcal{L}$ (see Theorem \ref{T0}). Therefore, $\mathcal{L}_b$ is bounded as well. To prove the compactness of $\mathcal{L}_b$ we shall use an extension of Theorem \ref{T2} for the case when an operator $K$ is acting to $L^q$ on the whole $\mathbb{R}^+.$ Similar to \cite[Theorem 5.8]{KZPS}
we consider first a set ${\mathcal M}_h:=\{f\in L^p(a,b)\colon |f|\le h\},$ where $h$ is an arbitrary positive number. Under this condition and in view of ${\rm mes}[0,b]<\infty$ the operator $\mathcal{L}_b$ is bounded from $L^\infty[0,b]$ to $L^q.$ Compactness of $\mathcal{L}_b:{\mathcal M}_h\to L^q$ can be proved similar to \cite[Theorem 5.2]{KZPS}. It remains to note that the rest transformation $\mathcal{L}_b:\{L^p[0,b]\setminus{\mathcal M}_h\}\to L^q$ has a norm tending to $0$ as $h\to+\infty$ (see \cite[Theorem 5.8]{KZPS} for details). Therefore, the operator $\mathcal{L}_b:L^p[0,b]\to L^q,$ $0<q<1\le p<\infty,$ is compact as a limit of compact operators as $h\to +\infty$.

Summing up we can claim that \eqref{L} is compact from $L^p$ to $L^q,$ $0<q<1\le p<\infty,$ on the strength of $\|\mathcal{L}_2\|_{L^p\to L^q}\to 0,$ $b\to\infty,$ compactness of $\mathcal{L}_b$ and
\begin{eqnarray}\label{limit1}
\|\mathcal{L}-\mathcal{L}_b\|_{L^p\to L^q}=\|\mathcal{L}_2\|_{L^p\to L^q}.
\end{eqnarray}

(v) {\it Sufficiency.} Suppose $\esup_{t\in\mathbb{R}^+}{\bar B}_q(t)<\infty$ and \begin{equation}\label{muk}{\rm(i)}\ \lim_{t\to 0}{\bar B}_q(t)=0,{\rm(ii)}\lim_{t\to \infty}{\bar B}_q(t)=0.\end{equation} Put \begin{equation*}\mathcal{L}_af(x):=\mathrm{e}^{-xa^\lambda}\int_a^b f(y)v(y)\mathrm{d}y,\hspace{1cm}x\in\mathbb{R}^+,\end{equation*} where $0<a<b<\infty,$ and note that $\mathcal{L}_a$ is the operator of rank 1 with the norm $$\|\mathcal{L}_a\|_{L^1\to L^q}=q^{-1/q}a^{-\lambda/q}{\bar v}_a(b)<\infty.$$ Besides, $\mathcal{L}_a$ is a majorant for the operator $\mathcal{L}_0,$ which is $L^1-L^q$--bounded with the norm estimated as follows: $$\|\mathcal{L}_0\|_{L^1\to L^q}=\|\mathcal{L}\|_{L^1(a,b)\to L^q}=q^{-1/q}\esup_{a<t<b}{\bar B}_q(t)=:q^{-1/q}M<\infty.$$ Suppose $\{f_n\}_{n\in\mathbb{Z}}$ is an arbitrary bounded sequence in $L^1(a,b)$ and assume $\{f_{n_k}\}$ is its Cauchy subsequence, that is for any $\varepsilon_0>0$ there exists $N(\varepsilon_0)$ such that $$\|f_{n_k}-f_{m_k}\|_{1,(a,b)}<\varepsilon_0, \hspace{1cm}n_k,m_k>N(\varepsilon_0).$$ Put $E_{n_k,m_k}(\varepsilon):=\Bigl\{x\in\mathbb{R}^+\colon \bigl| \mathcal{L}_0f_{n_k}(x)-\mathcal{L}_0f_{m_k}(x)\bigr|>\varepsilon\Bigr\}.$ We have for any $\varepsilon>0:$ \begin{eqnarray*} \int_{E_{n_k,m_k}(\varepsilon)}\mathrm{d}x\le \varepsilon^{-1}\int_{\mathbb{R}^+} \Bigl|\int_a^b\mathrm{e}^{-xy^\lambda}[f_{n_k}(y)-f_{m_k}(y)]v(y)\mathrm{d}y \Bigr|\mathrm{d}x\\
\le \varepsilon^{-1}\int_a^by^{-\lambda}\bigl|f_{n_k}(y)-f_{m_k}(y)\bigr|v(y)\mathrm{d}y\\
\le \varepsilon^{-1}Ma^{-\lambda/q'}\|f_{n_k}-f_{m_k}\|_{1,(a,b)}.  \end{eqnarray*} If $\varepsilon_0=\varepsilon\delta a^{\lambda/q'}M^{-1} $ then $\mu_{L^q}(E_{n_k,m_k}(\varepsilon))<\delta$ as $n_k,m_k>N(\varepsilon,\delta)$ for any $\varepsilon>0,$ $\delta>0.$ Therefore, $\mathcal{L}_0$ is compact in measure. Thus, $\mathcal{L}_0$ is $L^1-L^q$--compact as a transformation majorated by the compact operator $\mathcal{L}_a$ (see \cite[Ch. 2, \S 5.6]{KZPS}).

Further, \begin{eqnarray*}\|\mathcal{L}_1\|_{L^1\to L^q}=q^{-1/q}\esup_{t\in[0,a]}B_q(t)\le q^{-1/q}\sup_{t\in[0,a]}{\bar B}_q(t),\\\|\mathcal{L}_2\|_{L^1\to L^q}\le q^{-1/q}\esup_{t\in[b,\infty)}t^{-\lambda/q}{\bar v}_b(t)\le q^{-1/q}\sup_{t\in[b,\infty)}{\bar B}_q(t).\end{eqnarray*} Since the conditions \eqref{muk} are fulfilled we can state that $\mathcal{L}_1$ and $\mathcal{L}_2$ are operators with small norms, when $a\to 0,$ $b\to\infty.$ Together with compactness of $\mathcal{L}_0$ this implies the compactness of $\mathcal{L}$ from $L^1$ to $L^q$ for all $1\le q<\infty.$

{\it Necessity}. Suppose $\mathcal{L}$ is $L^1-L^q$--compact. Then the claim  $\displaystyle\esup_{t\in\mathbb{R}^+}{\bar B}_q(t)$ $<\infty$ follows from theorems \ref{T0}(iv), \ref{p1} and \ref{p=1}. As for necessity of \eqref{muk}(i), note that
\begin{equation*}\mathcal{L}f=\mathcal{L}(f\chi_{[0,x^{-1/\lambda}]}) +\mathcal{L}(f\chi_{[x^{-1/\lambda},\infty)}):=\mathcal{L}_xf+\mathcal{L}^xf \end{equation*} and $\mathcal{L}_x,$ $\mathcal{L}^x$ are compact. Besides, the condition \eqref{muk}(i) is equivalent to
\begin{equation}\label{muk'}\lim_{k\to -\infty}2^{-\lambda k/q}{\bar v}_0(2^k)=0.\end{equation} Now suppose the contrary. Then, similar to \cite[p. 84]{EGP}, given $\gamma\in(0,1)$ there is a sequence $k_j\to -\infty,$ some $\varepsilon>0$ and functions $f_{k_j}\ge 0,$ $\|f_{k_j}\|_{L^1}\le 1,$ such that \begin{equation*}\int_{0}^{2^{k_j}}f_m(y)v(y)\mathrm{d}y\ge \gamma\,{\bar v}_0(2^{k_j}),\hspace{5mm}\textrm{and}\hspace{5mm} 2^{-k_j\lambda/q}{\bar v}_0(2^{k_j})\ge\varepsilon.\end{equation*}
By continuity of the integral, there are $\beta_{k_j}\in(0,2^{k_j})$ such that \begin{equation*}\int_{\beta_{k_j}}^{2^{k_j}}f_{k_j}(y)v(y)\mathrm{d}y\ge \gamma^2\,{\bar v}_0(2^{k_j}).\end{equation*}
Set $F_{k_j}=f_{k_j}\chi_{(\beta_{k_j},2^{k_j})}.$ Then we have for $k_i$ and $k_j$ such that $2^{k_i+1}<\beta_{k_j}:$ \begin{eqnarray*}\|\mathcal{L}_xF_{k_i}-\mathcal{L}_xF_{k_j}\|_{q}^q=\int_{\mathbb{R}^+} \left|\int_0^{x^{-1/\lambda}} \mathrm{e}^{-xy^\lambda}[F_{k_i}(y)-F_{k_j}(y)]v(y)\mathrm{d}y\right|^q\mathrm{d}x\\
=\lambda\int_{\mathbb{R}^+}s^{-\lambda-1} \left|\int_0^{s}\mathrm{e}^{-(y/s)^\lambda}[F_{k_i}(y)-F_{k_j}(y)]v(y) \mathrm{d}y\right|^q\mathrm{d}s\\=:\|\widetilde{\mathcal{L}_x}F_{k_i}-\widetilde{\mathcal{L}_x}F_{k_j}\|_{q}^q
\ge \|\chi_{(2^{k_i},2^{k_i+1})}(\widetilde{\mathcal{L}_x} F_{k_i}-\widetilde{\mathcal{L}_x}F_{k_j})\|_{q}^q\\
=\|\chi_{(2^{k_i},2^{k_i+1})}\widetilde{\mathcal{L}_x}F_{k_i}\|_{q}^q\\=\lambda\int_{2^{k_i}}^ {2^{k_i+1}}s^{-\lambda-1}\left(\int_{\beta_{k_i}}^{2^{k_i}} \mathrm{e}^{-(y/s)^\lambda}f_{k_i}(y) v(y)\mathrm{d}y\right)^q\mathrm{d}s
\\\ge\mathrm{e}^{-1}\lambda \int_{2^{k_i}}^{2^{k_i+1}}s^{-\lambda-1}\mathrm{d}s\left( \int_{\beta_{k_i}}^{2^{k_i}}f_{k_i}(y)v(y)\mathrm{d}y\right)^q\\ \ge \gamma^{2q}\frac{2^\lambda-1}{2^{\lambda}\mathrm{e}}\,2^{-\lambda k_i}[{\bar v}_0(2^{k_i})]^q\ge \gamma^{2q}\frac{2^\lambda-1}{2^{\lambda}\mathrm{e}}\varepsilon^q>0,
\end{eqnarray*} and $\mathcal{L}_x$ is not compact.

Necessity of \eqref{muk}(ii) can be established by the similar way obtaining a contradiction with the compactness of $\mathcal{L}^x.$ Another way to prove \eqref{muk}(ii) for $q>1$ is analogous to the proof of necessity \eqref{comp}(ii) in the part (i) of this theorem.
\end{proof}
\begin{remark}
The condition $\esup_{t\in\mathbb{R}^+}{\bar B}_q(t)<\infty$ in (v) may be replaced by $\esup_{t\in\mathbb{R}^+}B_q(t)<\infty.$
\end{remark}

\section{COMPACTNESS OF THE STIELTJES TRANSFORM}

We start from boundedness and compactness criteria for the Hardy operator \begin{equation*}H_{\langle c_1,c_2\rangle}f(x):=\psi(x)\int_{c_1}^x f(y)\phi(y)\mathrm{d}y,\hspace{1cm}0\le c_1\le x\le c_2\le\infty,\end{equation*}
with weight functions $\phi^{p'}\in L_{loc}(\mathbb{R}^+)$ and $\psi^{q}\in L_{loc}(\mathbb{R}^+).$ Denote \begin{equation*}\Phi_{c_1}(c_2):=\int_{c_1}^{c_2} \phi^{p'}(y)\mathrm{d}y,\hspace{1cm}\Psi_{c_1}(c_2):=\int_{c_1}^{c_2} \psi^{q}(x)\mathrm{d}x.\end{equation*}
\begin{theorem}\label{H0} {\rm(i)} \cite{Br, EGP, Maz, Muk, Rie}
Let $1<p\le q<\infty.$ The operator $H_{\langle c_1,c_2\rangle}$ is bounded from $L^p\langle c_1,c_2\rangle$ to $L^q\langle c_1,c_2\rangle$ if and only if \begin{equation*}A_{\langle c_1,c_2\rangle}:=\sup_{c_1<t< c_2}A_{\langle c_1,c_2\rangle}(t):=\sup_{c_1<t<c_2}\bigl[\Phi_{c_1}(t)\bigr]^{1/p'}
\bigl[\Psi_t(c_2)\bigr]^{1/q}<\infty,\end{equation*} where $\|H_{\langle c_1,c_2\rangle}\|_{L^p\langle c_1,c_2\rangle\to L^q\langle c_1,c_2\rangle}\approx A_{\langle c_1,c_2\rangle}.$ Besides, $H_{\langle c_1,c_2\rangle}$ is compact if and only if $A_{\langle c_1,c_2\rangle}<\infty$ and \begin{equation*}\lim_{t\to c_1}A_{\langle c_1,c_2\rangle}(t)=\lim_{t\to c_2}A_{\langle c_1,c_2\rangle}(t)=0. \end{equation*}
{\rm(ii)} \cite{A, Maz} If $0<q<1<p<\infty$ or $1<q<p<\infty$ then $H_{\langle c_1,c_2\rangle}$ is bounded from $L^p\langle c_1,c_2\rangle$ to $L^q\langle c_1,c_2\rangle$ if and only if \begin{equation*}B_{\langle c_1,c_2\rangle}:=\left(\int_{c_1}^{c_2}\bigl[\Phi_{c_1}(t)\bigr]^{r/p'} \bigl[\Psi_t(c_2)\bigr]^{r/p} \psi^q(t)\mathrm{d}t\right)^{1/r}<\infty,\end{equation*} where $\|H_{\langle c_1,c_2\rangle}\|_{L^p\langle c_1,c_2\rangle\to L^q\langle c_1,c_2\rangle}\approx B_{\langle c_1,c_2\rangle}.$ Moreover, finiteness of $B_{\langle c_1,c_2\rangle}$ gives a necessary and sufficient condition for the compactness of $H_{\langle c_1,c_2\rangle}$ if $1<q<\infty.$\\[1mm]
{\rm(iii)} \cite{SinStep} Let $0<q<1=p.$ The Hardy operator $H_{\langle c_1,c_2\rangle}$ is bounded from $L^1\langle c_1,c_2\rangle$ to $L^q\langle c_1,c_2\rangle$ if and only if $B_{q<1,\langle c_1,c_2\rangle}<\infty,$ where
\begin{equation*}B_{q<1,\langle c_1,c_2\rangle}:=\left(\int_{c_1}^{c_2}[\esup_{c_1<y<t}\phi(y)]^{q/(1-q)} \bigl[\Psi_t(c_2)\bigr]^{q/(1-q)}
\psi^{q}(t)\mathrm{d}t\right)^{(1-q)/q}\end{equation*} and $\|H_{\langle c_1,c_2\rangle}\|_{L^1\langle c_1,c_2\rangle\to L^q\langle c_1,c_2\rangle}
\approx B_{q<1,\langle c_1,c_2\rangle}.$\\
{\rm(iv)} \cite{EGP} If $p=1\le q<\infty$ then $H_{\langle c_1,c_2\rangle}$ is bounded from $L^1\langle c_1,c_2\rangle$ to $L^q\langle c_1,c_2\rangle$ if and only if $B_{1\le q,\langle c_1,c_2\rangle}<\infty,$ where
\begin{equation*}B_{1\le q,\langle c_1,c_2\rangle}:=\sup_{c_1\le t\le c_2}\bigl[\esup_{c_1<y<t}\phi(y)\bigr]
\bigl[\Psi_t(c_2)\bigr]^{1/q}\end{equation*} and $\|H_{\langle c_1,c_2\rangle}\|_{L^1\langle c_1,c_2\rangle\to L^q\langle c_1,c_2\rangle}
\approx B_{1\le q,\langle c_1,c_2\rangle}.$
\end{theorem}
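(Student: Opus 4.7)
The plan is to assemble each part of Theorem \ref{H0} from the standard two-weight Hardy-inequality toolkit, and, in each case where compactness is asserted, to deduce it from boundedness by a three-piece splitting of the interval $\langle c_1,c_2\rangle$.

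For part (i), necessity of $A_{\langle c_1,c_2\rangle}<\infty$ follows by inserting the extremal test function $f_t=\phi^{p'-1}\chi_{(c_1,t)}$ into $\|H_{\langle c_1,c_2\rangle}f\|_q\le\|H\|\|f\|_p$; a direct computation returns $A_{\langle c_1,c_2\rangle}(t)\le\|H\|$ for every admissible $t$. For sufficiency I would use the standard dyadic block decomposition: pick $\{t_k\}$ with $\Phi_{c_1}(t_k)=2^k$, bound the contribution of each $(t_k,t_{k+1})$ by Hölder's inequality, and sum the resulting geometric series, with all coefficients controlled by $A_{\langle c_1,c_2\rangle}$. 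For the compactness assertion, split $H_{\langle c_1,c_2\rangle}=H_{(c_1,a)}+H_{(a,b)}+H_{(b,c_2)}$ with $c_1<a<b<c_2$. The middle piece is compact by Lemma \ref{L2} since the quantity $M_H=\|\psi(x)\Phi_{c_1}(x)^{1/p'}\|_{q,(a,b)}$ is finite by local integrability of the weights. The outer pieces have norms dominated respectively by $A_{\langle c_1,a\rangle}$ and $A_{\langle b,c_2\rangle}$, which vanish as $a\to c_1$ and $b\to c_2$ exactly under the two endpoint-limit hypotheses, so $H_{\langle c_1,c_2\rangle}$ appears as a norm limit of compact operators.

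For part (ii) with $1<q<p<\infty$, I would establish boundedness via Maz'ya's integration-by-parts scheme: after a single application of Hölder's inequality with exponents $(p/q, r/q)$ followed by Fubini, one recovers $\|H_{\langle c_1,c_2\rangle}f\|_q^q\le\gamma\,B_{\langle c_1,c_2\rangle}^q\|f\|_p^q$; necessity comes from saturating the inequality along a nested family of test functions $\phi^{p'-1}\chi_{(c_1,t_k)}$ for an appropriate sparse sequence. The case $0<q<1<p<\infty$ uses Maz'ya's testing characterization and the fact that compactness is automatic for any regular operator in this regime by Theorem \ref{T2}. When $1<q<\infty$, compactness follows by truncating the kernel on $\{a<y<x<b\}$: the truncation is compact by Lemma \ref{L2}, and the tail norms are controlled by $B_{\langle c_1,a\rangle}+B_{\langle b,c_2\rangle}$, which tends to $0$ because the integral defining $B_{\langle c_1,c_2\rangle}$ is absolutely continuous whenever it is finite.

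For parts (iii) and (iv) the workhorse is Theorem \ref{p=1}, which for $p=1$ identifies the operator norm with $\esup_y\|\psi(\cdot)k(\cdot,y)\phi(y)\|_q$. Applied to the Hardy kernel $k(x,y)=\chi_{(c_1,x)}(y)$ this gives $\esup_y\phi(y)\Psi_y(c_2)^{1/q}$; Theorem \ref{p1} then allows the replacement of $\phi$ by $\esup_{c_1<s<y}\phi(s)$, producing the $B_{1\le q,\langle c_1,c_2\rangle}$ of (iv). Part (iii) requires Sinnamon's reduction from $p=1$ to a level-set/rearrangement estimate; after this reduction the inequality becomes a one-weight Hardy inequality on non-increasing rearrangements, solvable by a single reverse-Hölder calculation with exponent $q/(q-1)<0$, extracting the $B_{q<1,\langle c_1,c_2\rangle}$ quantity. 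The main obstacle across all four parts is uniformly tracking the absolute constants so that the equivalences $\approx$ depend only on $p,q,\lambda$; the subtlest point is the compactness direction of (i) and (ii), where one must verify that Lemma \ref{L2} really applies to the middle piece, which requires the local integrability of $\phi^{p'}$ and $\psi^q$ built into the standing assumptions.
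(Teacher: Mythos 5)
The paper offers no proof of Theorem \ref{H0}: it is a survey statement assembled verbatim from the cited literature (Bradley, Muckenhoupt, Maz'ya, Riemenschneider, And\^{o}, Edmunds--Gurka--Pick, Sinnamon--Stepanov), so there is no internal argument to compare yours against. Your sketch reconstructs the standard proofs behind those citations, and the toolkit is right in every part: test functions plus dyadic blocks for the boundedness in (i), Maz'ya's H\"older--Fubini scheme for (ii), Theorem \ref{T2} for the automatic compactness when $0<q<1<p$, and the reduction of (iii)--(iv) to Theorems \ref{p1} and \ref{p=1}. Your three-piece splitting for compactness is moreover exactly the device the paper itself deploys later for $\mathcal{L}$ and $S$, so the spirit of your argument matches the paper's methodology even though the paper delegates this particular theorem to references.

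Two concrete gaps remain. First, in the compactness assertion of (i) you prove only the ``if'' half: endpoint limits imply compactness via the norm limit of compact operators. The ``only if'' half --- that compactness of $H_{\langle c_1,c_2\rangle}$ forces $\lim_{t\to c_1}A_{\langle c_1,c_2\rangle}(t)=\lim_{t\to c_2}A_{\langle c_1,c_2\rangle}(t)=0$ --- is nowhere addressed. It requires feeding the operator (and, for the $c_2$ endpoint, its adjoint) normalized weakly null families such as $f_t=\phi^{p'-1}\chi_{(c_1,t)}\bigl[\Phi_{c_1}(t)\bigr]^{-1/p}$ and checking $\|H_{\langle c_1,c_2\rangle}f_t\|_q\gtrsim A_{\langle c_1,c_2\rangle}(t)$; this is precisely the argument the paper runs for $\mathcal{L}$ in the necessity part of Theorem \ref{lapa}(i), and its analogue must be supplied here. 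Second, the splitting $H_{\langle c_1,c_2\rangle}=H_{(c_1,a)}+H_{(a,b)}+H_{(b,c_2)}$ is not literally a decomposition: for $x>a$ the integral $\int_{c_1}^{x}f\phi$ still carries the mass on $(c_1,a)$, so cross terms of the form $\chi_{(a,c_2)}(x)\psi(x)\int_{c_1}^{a}f\phi$ appear. They are rank one, hence compact, and the one supported on $(b,c_2)$ has norm exactly $A_{\langle c_1,c_2\rangle}(b)\to 0$, so the conclusion survives, but the bookkeeping should be made explicit; as written, the claim that the outer pieces are ``dominated by $A_{\langle c_1,a\rangle}$ and $A_{\langle b,c_2\rangle}$'' silently discards these terms. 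Part (iii) is only gestured at (``Sinnamon's reduction''), which is acceptable for a result you are importing from \cite{SinStep}, but it is not a proof.
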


Some of the results collected in Theorem \ref{H0} are also referenced in \cite{Sin1} and \cite{SinStep}. Besides, similar statement is true for the operator $$H_{\langle c_1,c_2\rangle}^\ast f(x):=\psi(x)\int_x^{c_2} f(y)\phi(y)\mathrm{d}y,\hspace{1cm}0\le c_1\le x\le c_2\le\infty.$$

\begin{theorem}\label{H0ast} {\rm(i)}
If $1<p\le q<\infty$ then $H_{\langle c_1,c_2\rangle}^\ast:L^p\langle c_1,c_2\rangle\to L^q\langle c_1,c_2\rangle$ if and only if \begin{equation*}A_{\langle c_1,c_2\rangle}^\ast:=\sup_{c_1<t<c_2}A_{\langle c_1,c_2\rangle}^\ast(t) :=\sup_{c_1<t<c_2}\bigl[\Phi_t(c_2)\bigr]^{1/p'}
\bigl[\Psi_{c_1}(t)\bigr]^{1/q}<\infty,\end{equation*} with $\|H_{\langle c_1,c_2\rangle}^\ast\|_{L^p\langle c_1,c_2\rangle\to L^q\langle c_1,c_2\rangle}\approx A_{\langle c_1,c_2\rangle}^\ast.$ Moreover, $H_{\langle c_1,c_2\rangle}^\ast$ is compact if and only if $A_{\langle c_1,c_2\rangle}^\ast<\infty$ and $$\lim_{t\to c_1}A_{\langle c_1,c_2\rangle}^\ast(t)=\lim_{t\to c_2}A_{\langle c_1,c_2\rangle}^\ast(t)=0.$$
{\rm(ii)} $H_{\langle c_1,c_2\rangle}$ is bounded from $L^p\langle c_1,c_2\rangle$ to $L^q\langle c_1,c_2\rangle$ for $0<q<1<p<\infty$ or $1<q<p<\infty$ if and only if \begin{equation*}B_{\langle c_1,c_2\rangle}^\ast:=\left(\int_{c_1}^{c_2}\bigl[\Phi_t(c_2)\bigr]^{r/p'} \bigl[\Psi_{c_1}(t)\bigr]^{r/p} \psi^q(t)dt\right)^{1/r}<\infty.\end{equation*} Besides, $\|H_{\langle c_1,c_2\rangle}^\ast\|_{L^p\langle c_1,c_2\rangle\to L^q\langle c_1,c_2\rangle}\approx B_{\langle c_1,c_2\rangle}^\ast$ and the condition $B_{\langle c_1,c_2\rangle}^\ast<\infty$ is necessary and sufficient for the compactness of $H_{\langle c_1,c_2\rangle}^\ast$ if $1\le q<\infty.$\\[1mm]
{\rm(iii)} \cite{SinStep} Let $0<q<1=p.$ The Hardy operator $H^\ast_{\langle c_1,c_2\rangle}$ is bounded from $L^1\langle c_1,c_2\rangle$ to $L^q\langle c_1,c_2\rangle$ if and only if $B^\ast_{q<1,\langle c_1,c_2\rangle}<\infty,$ where
\begin{equation*}B^\ast_{q<1,\langle c_1,c_2\rangle}:=\left(\int_{c_1}^{c_2}[\esup_{t<y<c_2}\phi(y)]^{q/(1-q)} \bigl[\Psi_{c_1}(t)\bigr]^{q/(1-q)}
\psi^{q}(t)dt\right)^{(1-q)/q}\end{equation*} and $\|H^\ast_{\langle c_1,c_2\rangle}\|_{L^1\langle c_1,c_2\rangle\to L^q\langle c_1,c_2\rangle}
\approx B^\ast_{q<1,\langle c_1,c_2\rangle}.$\\
{\rm(iv)} \cite{EGP} If $p=1\le q<\infty$ then $H^\ast_{\langle c_1,c_2\rangle}$ is bounded from $L^1\langle c_1,c_2\rangle$ to $L^q\langle c_1,c_2\rangle$ if and only if $B^\ast_{1\le q,\langle c_1,c_2\rangle}<\infty,$ where
\begin{equation*}B^\ast_{1\le q,\langle c_1,c_2\rangle}:=\sup_{c_1\le t\le c_2}\bigl[\esup_{t<y<c_2}\phi(y)\bigr]
\bigl[\Psi_{c_1}(t)\bigr]^{1/q}\end{equation*}
and $\|H^\ast_{\langle c_1,c_2\rangle}\|_{L^1\langle c_1,c_2\rangle\to L^q\langle c_1,c_2\rangle}
\approx B^\ast_{1\le q,\langle c_1,c_2\rangle}.$
\end{theorem}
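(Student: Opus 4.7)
The plan is to derive parts (i) and (ii) of the statement by duality from the corresponding parts of Theorem \ref{H0}; parts (iii) and (iv) are quoted directly from \cite{SinStep} and \cite{EGP} respectively and require no new argument. The key observation is that the formal adjoint of $H^\ast_{\langle c_1,c_2\rangle}$, whose kernel is $\psi(x)\phi(y)\chi_{\{x<y<c_2\}}$, is, by Fubini,
$$\bigl(H^\ast_{\langle c_1,c_2\rangle}\bigr)^\ast g(y) = \phi(y)\int_{c_1}^y g(x)\psi(x)\,dx,$$
that is, the left-sided Hardy operator $H_{\langle c_1,c_2\rangle}$ from Theorem \ref{H0} with the roles of the weights $\psi$ and $\phi$ interchanged.

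For part (i), $L^p$ and $L^q$ are reflexive when $1<p\le q<\infty$, hence $\|H^\ast_{\langle c_1,c_2\rangle}\|_{L^p\to L^q} = \|(H^\ast_{\langle c_1,c_2\rangle})^\ast\|_{L^{q'}\to L^{p'}}$, and by Schauder's theorem compactness of the two operators is equivalent. Applying Theorem \ref{H0}(i) to the adjoint with $(\phi,\psi,p,q)$ replaced by $(\psi,\phi,q',p')$ yields the boundedness condition
$$\sup_{c_1<t<c_2}[\Psi_{c_1}(t)]^{1/q}[\Phi_t(c_2)]^{1/p'} = A^\ast_{\langle c_1,c_2\rangle}<\infty,$$
and the two vanishing conditions at $c_1$ and $c_2$ transfer verbatim. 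In part (ii), the regime $1<q<p<\infty$ is handled identically: the constraint becomes $1<p'<q'<\infty$, which fits Theorem \ref{H0}(ii), and the exponent $r=pq/(p-q)$ is invariant under the swap $(p,q)\leftrightarrow(q',p')$, so the resulting integral condition rewrites to $B^\ast_{\langle c_1,c_2\rangle}<\infty$. Compactness in this regime again transfers via Schauder's theorem.

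The real obstacle is the range $0<q<1<p<\infty$ of (ii), where $L^q$ is not a Banach space and the clean duality argument is unavailable. Here I would either repeat the proof of Theorem \ref{H0}(ii) from \cite{A, Maz} with the direction of integration reversed throughout — the block decomposition and discretization arguments used there are symmetric in nature and port over without change — or apply the involution $y\mapsto 1/y$ (composed with an affine map on a finite interval) to convert $H^\ast$ into a left-sided Hardy operator with transformed weights, and then quote Theorem \ref{H0}(ii) directly. The bookkeeping required to check that the Jacobian of the substitution redistributes between the weights and the integration variable so that the exponents $r/p'$, $r/p$ and $q$ appear in the correct places is the only genuinely fiddly step. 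The compactness assertion in (ii) is restricted to $q>1$, so reflexivity is restored and Schauder's theorem suffices there as well.
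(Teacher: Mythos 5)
The paper does not actually prove Theorem \ref{H0ast}: it presents it as the ``similar statement'' for $H^\ast_{\langle c_1,c_2\rangle}$ obtained from Theorem \ref{H0} by symmetry, with (iii) and (iv) attributed to \cite{SinStep} and \cite{EGP}. Your duality-plus-reflection reconstruction is correct and is the standard way to supply the missing argument; the adjoint computation and the verification that $r=pq/(p-q)$ and the quantity $A^\ast_{\langle c_1,c_2\rangle}(t)$ are invariant under $(p,q,\phi,\psi)\mapsto(q',p',\psi,\phi)$ are both right. Two points deserve to be made explicit rather than waved at. First, in (ii) with $1<q<p<\infty$, applying Theorem \ref{H0}(ii) to the adjoint yields the condition
$\bigl(\int_{c_1}^{c_2}\bigl[\Psi_{c_1}(t)\bigr]^{r/q}\bigl[\Phi_t(c_2)\bigr]^{r/q'}\phi^{p'}(t)\,\mathrm{d}t\bigr)^{1/r}<\infty$,
not $B^\ast_{\langle c_1,c_2\rangle}$ itself; passing between the two requires the integration-by-parts identity relating the Persson--Stepanov and Maz'ya--Rosin forms of the condition (the same identity the paper records in the Remark after Theorem \ref{T0} and uses in Theorem \ref{TH}(ii)), and this is precisely where $q>1$ or local integrability of the weights is needed. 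Second, of your two options for $0<q<1<p<\infty$, the substitution $t\mapsto 1/t$ is the cleaner one and does work: it maps $\langle c_1,c_2\rangle$ onto $\langle c_2^{-1},c_1^{-1}\rangle$, reverses the direction of integration, and the Jacobian is harmless because both weights enter the criteria only through the accumulated integrals $\Phi$ and $\Psi$. Finally, note that the compactness assertion in (ii) is claimed for $1\le q<\infty$ while your Schauder argument covers only $q>1$; the endpoint $q=1$ (and indeed all of $0<q\le 1<p$) is instead covered by Theorem \ref{T2}, under which boundedness of a regular integral operator into $L^q$, $q\le 1$, already implies compactness, so that small gap closes without duality.
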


Now put $\Lambda:=\biggl(\int_{\mathbb{R}^+}\biggl(\int_{\mathbb{R}^+} \frac{w(x)\mathrm{d}x} {x^\lambda+y^\lambda}\biggr)^{p'}v^{p'}(y)\mathrm{d}y\biggr)^{1/p'},$
\begin{equation*}S_H:=\sup_{t\in\mathbb{R}^+}{\bar v}_0(t)\bigl[\mathcal{W}_t(\infty)\bigr]^{1/q},\hspace{5mm} S_{H^\ast}:=\sup_{t\in\mathbb{R}^+}{\bar v}_t(\infty)t^{-\lambda}\bigl[W_0(t)\bigr]^{1/q},\end{equation*}
\begin{equation*}S_{H,a}(t):={\bar v}_0(t)\bigl[\mathcal{W}_t(a)\bigr]^{1/q},\hspace{5mm} S_{H^\ast,a}(t):={\bar v}_t(a)t^{-\lambda}\bigl[W_0(t)\bigr]^{1/q},\end{equation*}
\begin{equation*}S_{H,b}(t):={\bar v}_b(t)\bigl[\mathcal{W}_t(\infty)\bigr]^{1/q}, \hspace{5mm} S_{H^\ast,b}(t):={\bar v}_t(\infty)t^{-\lambda}\bigl[W_b(t)\bigr]^{1/q}. \end{equation*}

Compactness criteria for the Stieltjes transformation $S:L^p\to L^q$ read
\begin{theorem}\label{St}
{\rm(i)} If $1<p\le q<\infty$ then $S:L^p\to L^q$ is compact if and only if $A_H+A_{H^\ast}<\infty$ and $\lim_{t\to 0}[A_H(t)+A_{H^\ast}(t)]=\lim_{t\to \infty}[A_H(t)+A_{H^\ast}(t)]=0.$\\
{\rm(ii)} Let $0<q<p<\infty$ and $p>1.$ If $q\not=1$ then $S$ is compact if and only if $B_H+B_{H^\ast}<\infty.$ If $q=1$ then $S$ is $L^p-L^1$--compact iff $\Lambda<\infty.$ \\
{\rm(iii)} If $0<q<1=p$ then $S$ is $L^p-L^q$--compact if and only if $B_{1,H}+B_{1,H^\ast}<\infty.$\\
{\rm(iv)} If $p=1\le q<\infty$ then the operator $S:L^p\to L^q$ is compact if and only if $S_H+S_{H^\ast}<\infty$ and  $\lim_{a\to 0}\sup_{0<t<a}[S_{H,a}(t)+S_{H^\ast,a}(t)]=\lim_{b\to \infty}\sup_{b<t<\infty}[S_{H,b}(t)+S_{H^\ast,b}(t)]=0.$
\end{theorem}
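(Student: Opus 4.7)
The plan is to reduce the theorem to compactness of the Hardy pair $(H,H^\ast)$ via the pointwise relation \eqref{rel}, which reads $\tfrac12(H+H^\ast)f\le Sf\le(H+H^\ast)f$ for $f\ge 0$. Under this reduction each condition in (i)--(iv) corresponds, through Theorems \ref{TH}, \ref{H0} and \ref{H0ast} (and through the $q=1$ formula in Theorem \ref{T3}(ii) for that slot of (ii)), to simultaneous compactness of $H$ and $H^\ast$ from $L^p$ to $L^q$ in the relevant regime.

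For sufficiency in (i)--(ii) I would read off compactness of $H$ and $H^\ast$ from the compactness halves of Theorems \ref{H0} and \ref{H0ast}; then $H+H^\ast$ is compact, and $S$ is a regular operator with this compact majorant, so Theorem \ref{TD} handles $1<p\le\infty,\ 1\le q<\infty$, while Theorem \ref{T2} handles $0<q\le 1<p<\infty$. For necessity in all parts, compactness of $S$ forces boundedness, so Theorem \ref{TH} supplies finiteness of the listed functional. The vanishing-limit clauses in (i), and the analogous clauses for (iv), I would prove by mimicking the necessity arguments in parts (i) and (v) of Theorem \ref{lapa}: for $z_k\to 0$, test $S$ against
\begin{equation*}
f_k(t)=\chi_{[0,z_k]}(t)v^{p'-1}(t)[V_0(z_k)]^{-1/p},
\end{equation*}
and for $z_k\to\infty$ against $g_k(t)=\chi_{[z_k,\infty)}(t)t^{-\lambda p'}v^{p'-1}(t)[\mathcal{V}_{z_k}(\infty)]^{-1/p}$. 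Both sequences are normalised in $L^p$ and converge weakly to $0$, so compactness of $S$ gives $\|Sf_k\|_q,\|Sg_k\|_q\to 0$. The elementary bound $(x^\lambda+y^\lambda)^{-1}\ge\tfrac12\min(x^{-\lambda},y^{-\lambda})$ applied on the diagonal regions $x>z_k>y$ and $y>z_k>x$ respectively produces lower estimates $\|Sf_k\|_q\ge c\,A_H(z_k)$ and $\|Sg_k\|_q\ge c\,A_{H^\ast}(z_k)$, forcing both quantities to vanish; for (iv) the same device works after replacing the $v^{p'-1}$ densities by the essential supremum $\bar v_0$.

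The main obstacle is parts (iii) and (iv), where $p=1$ puts us outside the scope of Theorems \ref{TD} and \ref{T2}. For these I would follow parts (iv)--(v) of Theorem \ref{lapa}: fix $0<a<b<\infty$, split $S=S_{(0,a)}+S_{(a,b)}+S_{(b,\infty)}$ by the $y$-support, and control the two tails in operator norm by the truncated Hardy bounds of Theorems \ref{H0}(iii)--(iv) and \ref{H0ast}(iii)--(iv), showing they vanish as $a\to 0,\ b\to\infty$. Compactness of the middle piece $S_{(a,b)}$ has to be proved by hand: on the truncated ball $\{f\in L^1[a,b]:|f|\le h\}$ majorate it by the rank-one operator $f\mapsto w(x)(x^\lambda+a^\lambda)^{-1}\int_a^b|f|v$, establish compactness in $L^q$-measure via a Chebyshev-type estimate exactly as in the treatment of $\mathcal{L}_0$ in the proof of Theorem \ref{lapa}(v), and then let $h\to\infty$ to absorb the piece on the complement of the ball. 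This measure-theoretic compactness argument on the truncated middle block is the technical heart of the whole proof; once it is in place, the splitting estimate together with the weak-convergence test of the preceding paragraph closes all four parts.
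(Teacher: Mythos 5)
Your handling of parts (i)--(iii) is essentially the paper's own argument: majorise $S$ by $H+H^\ast$ through \eqref{rel}, use the compactness halves of Theorems \ref{H0} and \ref{H0ast} together with Theorems \ref{TD} and \ref{T2} for sufficiency, get necessity of the finiteness conditions from boundedness via Theorem \ref{TH}, and for $p=1$, $q<1$ repeat the truncated-ball construction from Theorem \ref{lapa}(iv). The weak-convergence test functions for the limit clauses in (i) are the standard ones the paper alludes to (modulo a small normalisation slip in your $g_k$: the power of $t$ should be $-\lambda(p'-1)$, not $-\lambda p'$, so that $g_k^p=t^{-\lambda p'}v^{p'}[\mathcal{V}_{z_k}(\infty)]^{-1}$).

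Part (iv), however, contains a genuine gap. You split $S$ only in the $y$-variable and claim the tails $S(f\chi_{[0,a]})$ and $S(f\chi_{[b,\infty)})$ have small operator norm. By \eqref{rel} and Theorems \ref{H0}(iv), \ref{H0ast}(iv), the norm of $f\mapsto S(f\chi_{[0,a]})$ from $L^1$ to $L^q(\mathbb{R}^+)$ is comparable to $\sup_{0<t<a}\bigl[{\bar v}_0(t)[\mathcal{W}_t(\infty)]^{1/q}+{\bar v}_t(a)t^{-\lambda}[W_0(t)]^{1/q}\bigr]$, whereas the hypothesis of the theorem only provides $\lim_{a\to 0}\sup_{0<t<a}S_{H,a}(t)=0$ with $S_{H,a}(t)={\bar v}_0(t)[\mathcal{W}_t(a)]^{1/q}$ --- note $\mathcal{W}_t(a)$ rather than $\mathcal{W}_t(\infty)$. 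These are not equivalent: take $\lambda=1$, $v\equiv 1$, $w=\chi_{[1,2]}$; then $S_{H,a}\equiv 0$ for $a<1$ and all the hypotheses of (iv) hold (so $S$ is compact), yet $\sup_{0<t<a}{\bar v}_0(t)[\mathcal{W}_t(\infty)]^{1/q}$ is a fixed positive constant for every $a$, so your tail norms do not tend to $0$ and the limiting argument cannot close. The stated conditions only control the ``diagonal'' corners where $x$ and $y$ are simultaneously small or simultaneously large; the off-diagonal blocks (e.g.\ $y\le r$, $x\ge r$) are compact for a different reason, namely the boundedness and smoothness of the kernel $(x^\lambda+y^\lambda)^{-1}$ on such regions. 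This is exactly why the paper truncates in both variables, writing $S=S_{r,R}+\sum_{i=1}^2[S_{r,i}+S_{R,i}]$, and proves compactness of each piece directly through the $L^1\to L^q$ kernel criterion obtained by combining Theorem \ref{Ada} with Eveson's result \cite{Ev} (conditions \eqref{rk1}--\eqref{rk}), using \eqref{112} only to verify \eqref{rk1}. Some such second mechanism is indispensable; smallness of tail norms alone cannot prove (iv) under the stated hypotheses.
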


\begin{proof} (i) Let $1<p\le q<\infty$ and suppose $A_H+A_{H^\ast}<\infty$ (see \eqref{H1} and \eqref{H2}). Besides, assume
\begin{equation}\label{compH}{\rm(i)}\ \lim_{t\to 0}[A_H(t)+A_{H^\ast}(t)]=0,
{\rm(ii)}\lim_{t\to \infty}[A_H(t)+A_{H^\ast}(t)]=0.\end{equation}
By theorems \ref{H0}(i) and \ref{H0ast}(i) these conditions guaranty the compactness of the operator $H(|f|)+H^\ast\left(|f|\right),$ which is a majorant for the transformation $S$ (see the relation \eqref{rel}). From here the compactness of $S:L^p\to L^q$ ensues by Theorem \ref{TD}.

The condition $A_H+A_{H^\ast}<\infty$ and \eqref{compH} are also necessary for $L^p-L^q$--compactness of $S,$ when $1<p\le q<\infty,$ by standard arguments for the Hardy operators $H$ and $H^\ast.$

(ii), (iii): Let $0<q<p<\infty$ and $p\ge 1.$

If $q\not=1$ we suppose $B_H+B_{H^\ast}<\infty$ for $p>1$ and $B_{1,H}+B_{1,H^\ast}<\infty$ for the case $p=1.$ Compactness of $S$ in the case $p>1$ is guaranteed by $B_{H}+B_{H^\ast}<\infty$ (see theorems \ref{T2} and \ref{TD}). If $p=1$ and $B_{1,H}+B_{1,H^\ast}<\infty$ the compactness of $S$ can be stated similarly to sufficiency of the conditions (iv) in Theorem \ref{lapa}.

If $q=1$ then $S$ is compact by Lemma \ref{L2} provided $\Lambda<\infty.$

Necessity of the condition $B_H+B_{H^\ast}<\infty$ in (ii) and $B_{1,H}+B_{1,H^\ast}<\infty$ in (iii) ensues from the compactness and, therefore, boundedness of the operator $S.$

(iv) It remains to consider the case $p=1\le q<\infty.$ Suppose   $S_H+S_{H^\ast}<\infty,$
\begin{eqnarray}\label{112}{\rm(i)}\ \lim_{a\to 0}\sup_{0<t<a}[S_{H,a}(t)+S_{H^\ast,a}(t)]=0,\\{\rm(ii)}\lim_{b\to \infty}\sup_{b<t<\infty}[S_{H,b}(t)+S_{H^\ast,b}(t)]=0,\nonumber\end{eqnarray} and prove sufficiency of these assumptions for the $L^1-L^q$--compactness of $S.$

In view of \eqref{112} given $\varepsilon>0$ there exist $0<r< R<\infty$ such that \begin{eqnarray}\label{44} \sup_{0<t<r}S_{H,r}<\varepsilon/7,\hspace{1cm}\sup_{0<t<r}S_{H^\ast,r}<\varepsilon/7, \\\label{44'}\sup_{R<t<\infty}S_{H,R}<\varepsilon/7,\hspace{1cm} \sup_{R<t<\infty}S_{H^\ast,R}<\varepsilon/7.\end{eqnarray} Now we divide $S$ into a sum
\begin{equation*}Sf=S_{r,R}f+\sum_{i=1}^2 [S_{r,i}f+S_{R,i}f]\end{equation*} of compact operators \begin{equation*}S_{r,R}f:=\chi_{(r,R)}S(f\chi_{(r,R)}),\hspace{1cm}
S_{r,1}f:=\chi_{[0,R)}S(f\chi_{([0,r]}),\end{equation*}
\begin{equation*}S_{R,1}f:=\chi_{[R,\infty)}S(f\chi_{[0,R)}),\hspace{1cm}
S_{r,2}f:=\chi_{[0,r]}S(f\chi_{(r,\infty)}),\end{equation*}
\begin{equation*}S_{R,2}f:=\chi_{(r,\infty)}S(f\chi_{[R,\infty)}).\end{equation*}
To confirm the compactness of these operators  we shall use a combination of Theorem \ref{Ada} and \cite[Corollary 5.1]{Ev}. That is we need to show that for a given $\varepsilon>0$ there exist $\delta>0$ and points $0<s<t<\infty$ such that for almost all $y\in\mathbb{R}^+$ and for every $h>0$ with $h<\delta$
\begin{eqnarray}\label{rk1}{\rm (i)}\ J_{s}^q(y):=\int_0^s\left|{\bf k}_S(x,y)\right|^q\mathrm{d}x<\varepsilon^q,\\
{\rm (ii)}\ J_{t}^q(y):=\int_t^\infty\left|{\bf k}_S(x,y)\right|^q\mathrm{d}x<\varepsilon^q\nonumber\end{eqnarray} and
\begin{equation}\label{rk}J_{h}^q(y):=\int_{\mathbb{R}^+}\left|{\bf k}_S(x+h,y)-{\bf k}_S(x,y)\right|^q\mathrm{d}x<\varepsilon^q,\end{equation} where ${\bf k}_S(x,y):=w(x) k_S(x,y)v(y).$

We start from the operator $\mathcal{S}:=S_{r,1}+S_{r,R}+S_{R,2}.$ Suppose $h<\delta(\varepsilon)$ and write
\begin{equation*}J_{h,\mathcal{S}}(y)=v(y) \left(\int_{\mathbb{R}^+}w^q(x)\left[\frac{1}{x^\lambda+y^\lambda}- \frac{1}{(x+h)^\lambda+y^\lambda}\right]^q\mathrm{d}x\right)^{1/q}.\end{equation*} For simplicity consider the case $\lambda=1$ and denote \begin{equation*}I_{(c_1,c_2)}^q(y,h):=\int_0^r\frac{w^q(x)\mathrm{d}x} {(x+y)^{q}(x+y+h)^{q}}.\end{equation*} We have \begin{eqnarray*}J_{h,\mathcal{S}}(y)=  h\chi_{[0,r)}(y)v(y)I_{(0,r)}(y,h)
+h\chi_{[0,r)}(y)v(y)I_{(r,R)}(y,h)\\
+h \chi_{(r,R)}(y)v(y)I_{(r,R)}(y,h)
+h \chi_{[R,\infty)}(y)v(y)I_{(r,R)}(y,h)\\
+h \chi_{[R,\infty)}(y)v(y)I_{(R,\infty)}(y,h)=:\sum_{i=1}^5J_{h,i}(y). \end{eqnarray*}
The conditions \eqref{44} and \eqref{44'} imply $J_{h,1}(y)\le 2\varepsilon/7,$ and $J_{h,5}(y)\le 2\varepsilon/7.$
To estimate $J_{h,2}(y)$ note that \begin{equation*}J_{h,2}(y)\le h r^{-1}{\bar v}_0(r)\bigl[\mathcal{W}_r(\infty)\bigr]^{1/q}\le h r^{-1}S_H.\end{equation*}
From here, with $\delta=\varepsilon r/7S_H$ we obtain $J_{h,2}\le\varepsilon/7.$ Analogously, $J_{h,4}\le\varepsilon/7$ if $\delta=\varepsilon r/7S_{H^\ast}.$

For $J_{h,3}$ note that ${\bar v}_r(R)\bigl[W_r(R)\bigr]^{1/q}<M<\infty$ provided  $w\in L^q_{loc}(\mathbb{R}^+)$ and $v\in L^\infty_{loc}(\mathbb{R}^+)$. Therefore, $J_{h,3}(y)\le hM r^{-2}$ and $J_{h,3}(y)\le \varepsilon/7$ if $\delta=\varepsilon r^2/7M.$

Summing up, we obtain $J_{h,\mathcal{S}}(y)<\varepsilon$ for almost all $y\in\mathbb{R}^+,$ that is the condition \eqref{rk} is satisfied. Fulfillment of the claims \eqref{rk1} ensues from \eqref{44} and \eqref{44'} with $s=r$ and $t=R.$ Thus, the sum $S_{r,1}+S_{r,R}+S_{R,2}$ is compact.

Compactness of the operator $S_{r,2}$ can be demonstrated as follows. The condition \eqref{rk1}(ii) is automatically fulfilled with $t=r.$ To demonstrate \eqref{rk1}(i) note that $\|S_{r,2}\|_{L^1\to L^q}\approx{\bar v}_r(\infty)r^{-\lambda}\bigl[W_0(r)\bigr]^{1/q}\le S_{H^\ast}<\infty.$
Hence, given $\varepsilon>0$ there exists $0<s\le r$ such that $J_{s,S_{r,2}}(y)<\varepsilon.$ The condition \eqref{rk} may be shown analogously with $\delta=\varepsilon r^\lambda/S_{H^\ast}.$ Similar arguments work for the operator $S_{R,1}.$

Necessity of the conditions $[S_H+S_{H^\ast}]<\infty$ and \eqref{112} follow from \cite[Lemma 1, Theorem 1]{EGP} and the relation \eqref{rel}.
\end{proof}
\begin{remark} In some special cases the compactness of $S$ can be established through the Laplace transformation \eqref{L}. Indeed, by the representation
\begin{equation}\label{rep}S=\mathcal{L}_w^\ast\mathcal{L}_v
\end{equation} with $\mathcal{L}_v\equiv\mathcal{L}$ and $\mathcal{L}_w^\ast f(x):=w(y)\int_{\mathbb{R}^+}\mathrm{e}^{-xy^\lambda}f(x)\mathrm{d}x$
we are able to state the compactness of $S:L^p\to L^q$ if the conditions of Theorem \ref{lapa} are fulfilled for either $\mathcal{L}_v:L^p\to L^r$ or $\mathcal{L}_w:L^{q'}\to L^{r'}$ of the form
$\mathcal{L}_wf(x):=\int_{\mathbb{R}^+}\mathrm{e}^{-xy^\lambda}f(y) w(y)\mathrm{d}y.$ Here the parameter $r'$ is such that $r'=r/(r-1)$ for $r>1.$ In particular, if $w\equiv v$ and $p=q'\le q=p'$ then $S:L^p\to L^{p'}$ is compact if \begin{equation*}A=\sup_{t\in\mathbb{R}^+}A(t):= \sup_{t\in\mathbb{R}^+}t^{-\lambda/2}\bigl[V_0(t)\bigr]^{1/p'}<\infty\end{equation*} and $\lim_{t\to 0}A(t)=\lim_{t\to \infty}A(t)=0.$ Necessity of these conditions can be proved by Theorem \ref{T3} and the inequality $A_S(t)\ge A(t)^2.$\end{remark}

\section{CASES $p=\infty$ AND $q=\infty.$}
In view of the representation \eqref{rep} compactness criteria for the operator $\mathcal{L}$ from $p=\infty$ and/or to $q=\infty$ may be useful to state compactness of the transformation $S.$ To obtain the criteria we traditionally start from boundedness.

Let $\mathcal{L}$ be the transform from $L^\infty$ to $L^\infty.$ Then $\mathcal{L}$ is bounded if and only if \begin{equation}\label{c1} C_1:=\|v\|_1<\infty.\end{equation} If $\mathcal{L}$ is acting from $L^\infty$ to $L^q,$ $1<q<\infty,$ we remind that by Lemma \ref{L0} \begin{eqnarray*}
\int_{\mathbb{R}^+}\biggl(\int_{\mathbb{R}^+}\mathrm{e}^{-xy^\lambda}f(y)v(y) \mathrm{d}y\biggr)^q\mathrm{d}x
\approx\int_{\mathbb{R}^+}\biggl(\int_{0}^yf(t)v(t)\mathrm{d}t\biggr)^q
y^{-\lambda-1}\mathrm{d}y.\end{eqnarray*} Together with \cite[\S 1.3.2, Th.1]{Maz} this gives \begin{eqnarray*}\|\mathcal{L}\|_{L^\infty\to L^q}\approx \biggl(\int_{\mathbb{R}^+}t^{-\lambda}\biggl(\int_{0}^tv(y)\mathrm{d}y\biggr)^{q-1} v(t)\mathrm{d}t\biggr)^{1/q}=:C_{q>1}.\end{eqnarray*} If $q=1$ then \begin{equation*}\|\mathcal{L}\|_{L^\infty\to L^1}=\int_{\mathbb{R}^+}y^{-\lambda}v(y)\mathrm{d}y=:C_{q=1}.\end{equation*}

Let $q<1.$ Then $\mathcal{L}$ is bounded from $L^\infty$ to $L^q$ if $C_{q>1}<\infty.$ If $\mathcal{L}:L^\infty\to L^q$ boundedly then \begin{equation*}C_{q<1}:=\int_{\mathbb{R}^+}t^{-\lambda/q}v(t)\mathrm{d}t<\infty. \end{equation*}

Suppose $\mathcal{L}$ is an operator from $L^p$ to $L^\infty,$ when $p\ge 1.$ If $p=1$ then  \begin{equation*}\|\mathcal{L}\|_{L^1\to L^\infty} =\|v\|_{\infty}=:C_\infty.\end{equation*} If $p>1$ then $\mathcal{L}^\ast$ is acting from $L^1$ to $L^{p'},$ $p'>1.$ This yields \begin{equation*}\|\mathcal{L}\|_{L^p\to L^\infty}=\|\mathcal{L}^\ast\|_{L^1\to L^{p'}}=\|v\|_{p'}=:C_{p'}.\end{equation*} The estimate $\|\mathcal{L}\|_{L^p\to L^\infty}\le C_{p'}$ here and in \eqref{c1} ensues from Theorem \ref{p=1}. The reverse inequality follows by applying a function $g(x)=\chi_{[0,t^{-\lambda})}(x)t^\lambda$ into the $L^1-L^{p'}$--norm inequality for the operator $\mathcal{L}^\ast.$

Let $C_q$ denote $C_{q>1}$ or $C_{q=1}$ depending on a range of the parameter $q.$ Now we can state the required compactness criteria for $\mathcal{L}.$
\begin{theorem}
{\rm(i)} Let $1\le q<\infty.$ $\mathcal{L}$ is compact from $L^\infty$ to $L^q$ iff $C_q<\infty.$\\
{\rm(ii)} Let $q<1.$ The operator $\mathcal{L}$ is $L^\infty-L^q$--compact if $C_{q>1}<\infty.$ If $\mathcal{L}$ is compact from $L^\infty$ to $L^q$ then $C_{q<1}<\infty.$\\
{\rm(iii)} If $1<p\le\infty$ then $\mathcal{L}$ is $L^p-L^\infty$--compact if and only if $C_{p'}<\infty.$
\end{theorem}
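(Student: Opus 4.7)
The plan is to follow the same splitting strategy used throughout the paper (cf.\ the proof of Theorem \ref{lapa}), combined with reduction to the already-established compactness results.

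Necessity in all three parts is immediate and uses only what was already proved in the discussion preceding the theorem statement: a compact operator is bounded, and the boundedness characterizations there identify exactly $C_q<\infty$ for (i), $C_{q<1}<\infty$ for the necessary direction of (ii), and $C_{p'}<\infty$ for (iii).

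For sufficiency in (i) and (ii), I would write $\mathcal{L}=\mathcal{L}_1+\mathcal{L}_0+\mathcal{L}_2$ with $\mathcal{L}_0 f=\mathcal{L}(f\chi_{(a,b)})$ and the tails $\mathcal{L}_1,\mathcal{L}_2$ supported on $[0,a]$ and $[b,\infty)$ respectively, $0<a<b<\infty$. For the interior operator I would exploit the trivial embedding $L^\infty(a,b)\hookrightarrow L^{p_0}(a,b)$ for any $p_0\in(1,\infty)$ on a finite-measure interval, so that $\mathcal{L}_0\colon L^\infty\to L^q$ factors through $L^{p_0}$ and is compact by the relevant part of Theorem \ref{lapa} (part (i) when $1\le q<\infty$, part (iii) when $q<1$); here one uses that $v\in L^{p'_0}_{loc}$ forces the finite-interval quantities $A_{\mathcal{L},\langle a,b\rangle}$ or $B_{\mathcal{L},\langle a,b\rangle}$ to be finite on a bounded, bounded-away-from-zero window. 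The tail norms $\|\mathcal{L}_i\|_{L^\infty\to L^q}$ are controlled by the truncated versions of $C_q$ (respectively $C_{q>1}$ when $q<1$), and these tend to zero as $a\to 0$ and $b\to\infty$ by absolute continuity of the Lebesgue integral, using exactly the explicit formulas for $\|\mathcal{L}\|_{L^\infty\to L^q}$ derived in the preamble. Compactness of $\mathcal{L}$ then follows as a uniform operator limit of the compact $\mathcal{L}_0$'s.

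For sufficiency in (iii), my preferred approach is duality: $(L^p)^*=L^{p'}$ for $p<\infty$ and the formal adjoint is $\mathcal{L}^*g(y)=v(y)\int_{\mathbb{R}^+}\mathrm{e}^{-xy^\lambda}g(x)\mathrm{d}x$, which maps $L^1\to L^{p'}$ with norm $\|v\|_{p'}=C_{p'}$. Under $C_{p'}<\infty$ this adjoint is $L^1\to L^{p'}$-compact by Theorem \ref{lapa}(v) applied with the roles of the spaces interchanged (the ``$p=1$-to-$L^q$'' case with $q=p'$), and Schauder's theorem then yields compactness of $\mathcal{L}\colon L^p\to L^\infty$. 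For the endpoint $p=\infty$ the duality trick is less clean, so I would argue directly by Arzelà--Ascoli: for any sequence $\{f_n\}$ in the unit ball of $L^\infty$, Hölder's inequality gives $\|\mathcal{L}f_n\|_\infty\le\|v\|_1<\infty$, dominated convergence applied to $\mathrm{e}^{-xy^\lambda}v(y)$ gives equicontinuity of $\{\mathcal{L}f_n\}$ on each $[\varepsilon,\infty)$ and uniform decay $\mathcal{L}f_n(x)\to 0$ as $x\to\infty$, while behaviour near $0$ is controlled by the same alternative splitting $f_n=f_n\chi_{[0,a]}+f_n\chi_{(a,\infty)}$; together these give relative compactness of $\{\mathcal{L}f_n\}$ in $L^\infty$.

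The main obstacle, in my view, is the middle-operator compactness in (i)--(ii): the cleanest route really is the $L^\infty\hookrightarrow L^{p_0}$ factorisation on $(a,b)$, but one must verify that the quantities governing Theorem \ref{lapa} on the truncated interval are indeed finite purely from the local integrability of $v$, and this requires a short explicit estimate (of the type \eqref{1}) rather than an appeal to the global hypothesis. The Arzelà--Ascoli step for (iii) at $p=\infty$ is similarly the only place where mere boundedness of $\|v\|_1$ must be promoted to a quantitative equicontinuity, and dominated convergence in the variable $x$ should suffice once the bound is localised away from $x=0$.
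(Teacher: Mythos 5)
Your overall architecture for (i)--(ii) (split $\mathcal{L}=\mathcal{L}_1+\mathcal{L}_0+\mathcal{L}_2$, kill the tails via the explicit formulas for $\|\mathcal{L}\|_{L^\infty\to L^q}$, treat the middle piece separately) is reasonable, but the step you yourself flag as the main obstacle is where the argument genuinely breaks. The factorisation $L^\infty(a,b)\hookrightarrow L^{p_0}(a,b)\to L^q$ needs $\int_a^b v^{p_0'}<\infty$ for some $p_0>1$ (this is exactly what the estimate \eqref{1} consumes), whereas the standing hypothesis for $p=\infty$ is only $v\in L^{p'}_{loc}=L^1_{loc}$, and the compactness hypothesis $C_q<\infty$ adds nothing locally: a weight behaving like $|y-c|^{-1}(\log|y-c|)^{-2}$ near a point $c\in(a,b)$ is locally integrable, compatible with $C_q<\infty$, and lies in no $L^s_{loc}$ with $s>1$, so no choice of $p_0$ brings your middle operator under Theorem \ref{lapa}. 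The repair is to avoid the intermediate Lebesgue space altogether: on the unit ball of $L^\infty$ one has the pointwise domination $|\mathcal{L}_0f(x)|\le \mathrm{e}^{-xa^\lambda}\int_a^b|f|v$, so $\mathcal{L}_0$ is majorised by a rank-one operator that is $L^\infty$--$L^q$ bounded as soon as $v\in L^1(a,b)$, and Theorem \ref{TD} (for $q\ge 1$) or the order-bounded-set argument of \cite[Theorem 5.2]{KZPS} (for $q<1$; the $L^\infty$ unit ball is itself order-bounded) yields compactness. This is essentially what the paper does: its proof is a direct appeal to \cite[Theorem 5.2]{KZPS} together with the relative-compactness criterion of Theorem \ref{Ada} applied to the image of the order-bounded unit ball, with no splitting through an intermediate $L^{p_0}$.

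In (iii) your Arzel\`a--Ascoli argument for $p=\infty$ is sound, and it in fact works verbatim for all $1<p\le\infty$: H\"older with exponent $p'$ plus dominated convergence in $x$ gives uniform equicontinuity of $\{\mathcal{L}f\}$ on the compactification $[0,\infty]$ and uniform decay at infinity from $\|v\|_{p'}<\infty$ alone, so the case split is unnecessary. The duality route for $p<\infty$, by contrast, is not a literal application of Theorem \ref{lapa}(v): the adjoint $\mathcal{L}^\ast g(y)=v(y)\int_{\mathbb{R}^+}\mathrm{e}^{-xy^\lambda}g(x)\mathrm{d}x$ carries $v$ as an \emph{outer} weight with constant inner weight, whereas (v) is a criterion for the inner-weighted form whose hypotheses concern $t^{-\lambda/q}{\bar v}_0(t)$ and neither reduce to nor follow from $\|v\|_{p'}<\infty$; Theorem \ref{p=1} gives the norm of $\mathcal{L}^\ast$ but not its compactness. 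Drop the duality step and use the direct argument throughout.
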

\begin{proof} Proof of the results rests on \cite[Theorem 5.2]{KZPS} and \cite{Ad}.
\end{proof}
\begin{remark}
The operator $\mathcal{L}$ can not be compact from $L^1$ to $L^\infty$ for any $v.$
\end{remark}
Since the Stieltjes transformation $S$ is two-weighted then its compactness criteria for $p=\infty$ and/or $q=\infty$ can be derived from the results of Sec. 4.

{\bf Acknowledgements.} The European Commission is grant-giving authority for the research of the author (Project IEF-2009-252784). The work is also partially supported by Russian Foundation for Basic Research  (Project No. 09-01-98516) and Far-Eastern Branch of the Russian Academy of Sciences (Project No. 09-II-CO-01-003).





\bibliographystyle{model1-num-names}
\bibliography{<your-bib-database>}







\end{document}